\newtheorem{Th}{Theorem}[section]
\newtheorem{Lem}[Th]{Lemma}
\newtheorem{Prop}[Th]{Proposition}
\newtheorem{Res}[Th]{Result}
\theoremstyle{definition}
\newtheorem{Def}[Th]{Definition}
\newtheorem{Not}[Th]{Note}
\newtheorem{Rem}[Th]{Remark}
\newtheorem{Ex}[Th]{Example}
\newcommand{\R}{\mathbb R}
\newcommand{\N}{\mathbb N}
\newcommand{\C}{\mathbb C}
\newcommand{\K}{\mathbb K}
\newcommand{\X}{\mathcal X}
\newcommand{\com}[2]{\mathscr C_{#2}(#1)}
\newcommand{\s}{\text{span}}
\newcommand{\cd}[1]{\text{card}(#1)}
\begin{document}

\title{Basis and Dimension of Exponential Vector Space}
\author{Jayeeta Saha$^a$\footnote{Corresponding Author, e-mail : jayeetasaha09@gmail.com
\hspace{.75 cm} sjpm@caluniv.ac.in} and Sandip Jana$^b$\\ \begin{small}$^a$ \emph{Department of Mathematics, Vivekananda College, Thakurpukur, Kolkata-700063, INDIA}
\end{small}\\ \begin{small}$^b$ \emph{Department of Pure Mathematics, University of Calcutta, Kolkata-700019, INDIA }\end{small}\\ 
}
\date{}
\maketitle

\begin{abstract}

Exponential vector space [shortly \emph{evs}] is an algebraic order extension of vector space in the sense that every evs contains a vector space and conversely every vector space can be embedded into such a structure. This evs structure consists of a semigroup structure, a scalar multiplication and a partial order. In this paper we have developed the concepts of basis and dimension of an evs by introducing the ideas of orderly independent set and generating set with the help of partial order and algebraic operations. We have found that like vector space,
 an evs does not contain basis always. We have established a necessary and sufficient condition for an evs to have a basis. It was shown that equality of dimension is an evs property but the converse is not true. We have studied the dimension of subevs and found that every evs contains a subevs with all possible lower dimensions. Lastly we have computed basis and dimension of some evs which help us to explore the theory of basis by creating counter examples in different aspects. 

\end{abstract}

AMS Classification: 08A99, 06F99, 15A99

Key words :  Vector space, exponential vector space, testing set, generator, orderly independent set, basis, dimension, feasible set.

\section{Introduction}

Exponential vector space  is an algebraic ordered extension of vector space. The word `extension' is used because of the fact that every exponential vector space contains a vector space and conversely every vector space can be embedded into such a structure. This structure comprises a semigroup structure, a scalar multiplication and a compatible partial order. We now start with the definition of evs.  

\begin{Def}\cite{evs-quot} Let $(X,\leq)$ be a partially ordered set, `$+$' be a binary operation on 
	$X$ [called \emph{addition}] and `$\cdot$'$:K\times X\longrightarrow X$ be another composition [called \emph{scalar multiplication}, $K$ being a field]. If the operations and the partial order satisfy the following axioms then $(X,+,\cdot,\leq)$ is called an \emph{exponential vector space} (in short \emph{evs}) over $K$ [This structure was initiated with the name \emph{quasi-vector space} or \emph{qvs} by S. Ganguly et al. in \cite{C(X)}].
	\begin{align*}
	A_1&: (X,+)\text{ is a commutative semigroup with identity } \theta\\  
	A_2&: x\leq y\ (x,y\in X)\Rightarrow x+z\leq y+z\text{ and } \alpha\cdot x\leq
	\alpha\cdot y,\  \forall z\in X,  \forall \alpha\in K\\ 
	A_3&:\text{(i)}\ \alpha\cdot(x+y)=\alpha\cdot x+\alpha\cdot y\\ 
	&\quad\text{(ii)}\ \ \alpha\cdot(\beta\cdot x)=(\alpha\beta)\cdot x\\ 
	&\quad\text{(iii)}\  \ (\alpha+\beta)\cdot x \leq \alpha\cdot x+\beta \cdot x\\
	&\quad\text{(iv)}\ \ 1\cdot x=x,\text{ where `1' is the multiplicative identity in }K,\\ 
	& \forall\,x,y\in X,\  \forall\,\alpha,\beta\in K \\
	A_4&: \alpha\cdot x=\theta \text{ iff }\alpha=0\text{ or }x=\theta\\ 
	A_5&: x+(-1)\cdot x=\theta\text{ iff } x\in X_0:=\big\{z\in X:\ y\not\leq z,\,\forall\,y \in X\smallsetminus\{z\}\big \}\\ 
	A_6&: \text{For each }x \in X,\, \exists\,p\in X_0\text{ such that }p\leq x.
	\end{align*}   
	\label{d:evs}\end{Def}

In the above definition, the axiom $ A_3 $ (iii) indicates a rapid growth of the elements of $X$ due to the fact $x+x\geq2x $ and the axiom $ A_6 $ gives some positive sense of each elements. These two facts express the exponential behaviour of the elements of an evs.

In the axiom $ A_5 $,  we can notice that $X_0$ is precisely the set of all minimal elements of the evs $X$ with respect to the partial order on $X$ and forms the maximal vector space (within $X$) over the same field as that of $X$ (\cite{C(X)}). We call this vector space $X_0$ as the `\emph{primitive space}' or `\emph{zero space}' of $X$ and the elements of $X_0$ as `\emph{primitive elements}'. 

Also given any vector space $V$ over some field $K$, an evs $X$ can be constructed (as shown below) such that $V$ is isomorphic to $X_0$. In this sense, ``exponential vector space'' can be considered as an algebraic ordered extension of vector space.

\begin{Ex} \cite{evs-quot}
	Let $X:=\big\{(r,a)\in\R\times V:r\geq0,a\in V\big\}$, where $V$ is a vector space over some field $K$. Define operations and partial order on $X$ as follows : for $(r,a),(s,b)\in X$ and $\alpha\in K$,\\
	(i) $(r,a)+(s,b):=(r+s,a+b)$;\\ 
	(ii) $\alpha(r,a):=(r,\alpha a)$, if $\alpha\neq0$ and $0(r,a):=(0,\theta)$, $\theta$ being the identity in $V$;\\ 
	(iii) $(r,a)\leq(s,b)$ iff $r\leq s$ and $a=b$.\\ Then $X$ becomes an exponential vector space over $K$ with the primitive space $\{0\}\times V$ which is evidently isomorphic to $V$.
\end{Ex}

Initially the idea of this structure was given by S. Ganguly et al. with the name ``quasi-vector space'' in \cite{C(X)} and the following example of the hyperspace was the main motivation behind this new stucture.

\begin{Ex}\cite{C(X)}
	Let $\com{\X}{}$ be the topological hyperspace consisting of all non-empty compact subsets of a Hausd\"{o}rff topological vector space $\X$ over the field $ \K $ of real or complex numbers. Then $ \com{\X}{} $ becomes an evs with respect to the operations and partial order defined as follows. For $A,B\in\com{\X}{}$ and $\alpha\in\K$,\\
	(i) $A+B:=\{a+b:a\in A,b\in B\}$\\ 
	(ii) $\alpha A:=\{\alpha a:a\in A\}$\\
	(iii) The usual set-inclusion as the partial order.
\label{CX}\end{Ex}

We now topologise an exponential vector space. For this we need the following concept.

\begin{Def}\cite{Nach} Let `$\leq$' be a preorder in a topological
space $Z$; the preorder is said to be \emph{closed} if its graph
$G_{\leq}(Z):=\big\{(x,y)\in Z\times Z:x\leq y\big\}$ is closed in $Z\times Z$ (endowed
with the product topology).
\end{Def}

\begin{Th} {\em\cite{Nach}} A partial order `$\leq$' in a topological space $Z$ will be a closed order iff for any $x,y\in Z$ with
$x\not\leq y$, $\exists$ open neighbourhoods $U,V$ of $x,y$ respectively in
$Z$ such that $(\uparrow U)\cap(\downarrow V)=\emptyset$, where
$\uparrow U:=\{z\in Z:z\geq u \text{ for some }u\in U\}$ and
$\downarrow V:=\{z\in Z:z\leq v \text{ for some }v\in V\}$.
\label{t:clord}\end{Th}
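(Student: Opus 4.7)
The proof is an ``if and only if'' statement about the closedness of the graph $G_\leq(Z)$ in the product topology, so the plan is to handle the two implications separately, each by directly translating the definitions.

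For the forward direction, I would assume $\leq$ is closed, pick $x,y\in Z$ with $x\not\leq y$, and observe that $(x,y)\notin G_\leq(Z)$. Since the complement of $G_\leq(Z)$ in $Z\times Z$ is open, the product topology yields basic open neighbourhoods $U$ of $x$ and $V$ of $y$ such that $(U\times V)\cap G_\leq(Z)=\emptyset$, i.e.\ $u\not\leq v$ for every $u\in U$, $v\in V$. The content of this half of the theorem is then to upgrade this to disjointness of $\uparrow U$ and $\downarrow V$: if some $z$ belonged to both, I would pick $u\in U$ with $u\leq z$ and $v\in V$ with $z\leq v$, apply transitivity to conclude $u\leq v$, and reach a contradiction.

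For the reverse direction, I would show that the complement of $G_\leq(Z)$ is open. Given $(x,y)$ with $x\not\leq y$, the hypothesis provides open neighbourhoods $U\ni x$, $V\ni y$ with $(\uparrow U)\cap(\downarrow V)=\emptyset$; the goal is to verify that $U\times V$ avoids $G_\leq(Z)$ entirely. If $(u,v)\in(U\times V)\cap G_\leq(Z)$, then reflexivity gives $u\in\uparrow U$ and $v\in\downarrow V$, while $u\leq v$ together with $u\in U$ forces $v\in\uparrow U$, so $v\in(\uparrow U)\cap(\downarrow V)$, contradicting the choice of $U,V$. Hence $U\times V$ is an open neighbourhood of $(x,y)$ disjoint from $G_\leq(Z)$, and $G_\leq(Z)$ is closed.

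Both directions are essentially bookkeeping; there is no genuine obstacle. The only subtle point, which I would make sure to flag, is the use of reflexivity and transitivity of the partial order: reflexivity is what places a point inside its own up-set or down-set, and transitivity is what converts a putative common point of $\uparrow U$ and $\downarrow V$ into a pair in $G_\leq(Z)\cap(U\times V)$. Without these two properties the equivalence would fail, which is why the statement is formulated for (pre)orders rather than arbitrary relations.
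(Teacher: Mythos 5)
Your proof is correct: both directions are the standard unwinding of the definition of a closed graph in the product topology, with transitivity converting a common point of $\uparrow U$ and $\downarrow V$ into a pair of $G_{\leq}(Z)\cap(U\times V)$ and reflexivity supplying the converse. The paper itself gives no proof of this theorem (it is quoted from Nachbin), so there is nothing to compare against; your argument is the expected one and is complete.
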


\begin{Def}\cite{evs-quot}  An exponential vector space $X$ over the field $\K$ of real or complex numbers is said to be a \emph{topological exponential vector space} if there exists a topology on $X$ with respect to which the addition and the  scalar multiplication are continuous and the partial order `$\leq$' is
closed (Here $\K$ is equipped with the usual topology).
\end{Def}

\begin{Rem} If $X$ is a topological exponential vector space then its primitive space $X_0$ becomes a topological vector space, since restriction of a continuous function is continuous. Moreover, the closedness of the partial order `$\leq$' in a topological exponential vector space $X$ readily implies (in view of Theorem \ref{t:clord}) that $X$ is Hausd\"{o}rff and hence $X_0$ becomes a Hausd\"{o}rff topological vector space.
\end{Rem}

\begin{Ex}\cite{mor} Let $X:=[0,\infty)\times V$, where $V$ is a vector space over the field $\K$ of real or complex numbers. Define operations and partial order on $X$ as follows : for $(r,a),(s,b)\in X$ and $\alpha\in\K$,\\
(i) $(r,a)+(s,b):=(r+s,a+b)$,\\ 
(ii) $\alpha(r,a):=(|\alpha|r,\alpha a)$,\\ 
(iii) $(r,a)\leq(s,b)$ iff $r\leq s$ and $a=b$.\\
Then $[0,\infty)\times V$ becomes an exponential vector space with the primitive space $\{0\}\times V$ which is clearly isomorphic to $V$.

In this example, if we consider $V$ as a Hausd\"{o}rff topological vector space then $[0,\infty)\times V$ becomes a topological exponential vector space with respect to the product topology, where $[0,\infty)$ is equipped with the subspace topology inherited from the real line $\R$.

If instead of $V$ we take the trivial vector space $\{\theta\}$ in this example then, the resulting topological evs is $[0,\infty)\times\{\theta\}$ which can be clearly identified with the half ray $[0,\infty)$ of the real line.
\label{e:0inf}\end{Ex}

In this paper we have developed the concept of basis and dimension of evs. We know that basis of a vector space is a minimal part of it which generates the entire space. But in evs it is impossible to express every element as a linear combination of some particular elements due to the exponential behaviour of its elements. In this paper, with the help of partial order we have developed the ideas of generating sets, orderly independent sets which help us to define basis. It has been shown that basis of an evs is identified by a minimal generating set whereas maximal orderly independent set fails to form a basis [shown by a counter example], though every basis is a maximal orderly independent set. The main difference between a vector space and an evs in this respect is that an evs may not have a basis always (like vector space). But for a topological evs, we have shown that, if it has a basis then it contains uncountably many bases. We have found out a property of every element of a basis which helped us to give a necessary and sufficient condition for an evs to have a basis. After that we have introduced the concept of dimension of an evs and shown that equality of dimension is an evs property, though two non order-isomorphic evs may have same dimension. 

Lastly we have studied the dimension of subevs and shown that every evs contains subevs(s) with all possible lower dimensions. In the last section of this paper computation of basis and dimension of some evs are shown.  
 
\section{Prerequisites}

In this section we have discussed some definitions, results and examples of exponential vector space which are very much required to develop the main context. We now first start with the definition of subevs. 

\begin{Def} \cite{spri} A subset $Y$ of an exponential vector space $X$ is said to be a \emph{sub exponential vector space} 
(\emph{subevs} in short) if $Y$ itself is an exponential vector space with respect to the compositions of 
$X$ being restricted to $Y$.
\label{d:subevs}\end{Def} 

\begin{Not} \cite{spri} A subset $Y$ of an exponential vector space $X$ over a field $K$ is a sub exponential vector space iff $Y$ satisfies the following :\\ 
(i) $\alpha x+y\in Y,\ \forall\,\alpha\in K$, $\forall\,x,y \in Y$.\\ 
(ii) $Y_{0}\subseteq X_{0}\bigcap Y$, where $Y_0:=\big\{z\in Y:y\nleq z,\forall\,y\in Y\smallsetminus\{z\}\big\}$\\ 
(iii) For any $y\in Y$, $\exists\,p\in Y_{0}$ such that $p\leq y$. 

If $Y$ is a subevs of $X$ then actually $Y_0=X_0\cap Y$, since for any $Y\subseteq X$
we have $X_0\cap Y\subseteq Y_0$. $[0,\infty)\times\{\theta\}$ is clearly a subevs of the evs $[0,\infty)\times V$.
\label{n:subevs}\end{Not}

We have used the following result to form a non-topological exponential vector space.

\begin{Res} {\em\cite{qvs}} In a topological evs $X$ if $a=a+x$ for some $a,x\in X$ then $x=\theta$.
	\label{ch1:r:idem}\end{Res}
	
	To talk about an evs property of this space we have to know the idea of order-morphism.
	
\begin{Def}\cite{mor} A mapping $f:X\longrightarrow Y$ ($X,Y$ being two exponential vector spaces over the field $K$) is called an \emph{order-morphism} if\\
(i) $f(x+y)=f(x)+f(y)$, $\forall\,x,y\in X$\\
(ii) $f(\alpha x)=\alpha f(x)$, $\forall\,\alpha\in K$, $\forall\,x\in X$\\
(iii) $x\leq y$ $(x,y\in X)\Rightarrow f(x)\leq f(y)$\\
(iv) $p\leq q$ $\big(p,q\in f(X)\big)\Rightarrow f^{-1}(p)\subseteq\downarrow f^{-1}(q)$
and $f^{-1}(q)\subseteq\uparrow f^{-1}(p)$.

A bijective (injective, surjective) order-morphism is called an \emph{order-isomorphism} (\emph{order-monomorphism}, \emph{order-epimorphism} respectively).

If $X,Y$ are two topological evs over $\K$ then an order-isomorphism $f:X\longrightarrow Y$ is said to be a \emph{topological order-isomorphism} if $f$ is a homeomorphism.
\label{d:mor}\end{Def}

\begin{Def} A property of an evs is called an \emph{evs property} if it remains invariant under order-isomorphism.
\end{Def}

The concept of order-isomorphism is competent enough to extract the structural beauty of an evs by judging the invariance of its various properties. Since the composition of two order-isomorphisms, the inverse of an order-isomorphism and the identity map are again order-isomorphisms, the concept thereby produces a partition on the collection of all evs over some common field; this helps one to distinguish two evs belonging to two different classes under this partition.

\begin{Def}\cite{spri} In an evs $X$ the \emph{primitive} of $x\in X$ is defined as the set\\ \centerline{$P_x:=\{p\in X_\circ : p\leq x\}$} The axiom $A_6$ in Definition \ref{d:evs} ensures that the primitive of each element of an evs is nonempty.
\end{Def}

\begin{Def}\cite{spri} An evs $X$ is said to be a \emph{single primitive evs} if $P_x$ is a singleton set for each $x\in X.$ Also, in a single primitive evs $X$, $P_{x+y}=P_x+P_y$ and $P_{\alpha x}=\alpha P_x$, $\forall\,x, y\in X$ and for all scalar $\alpha$.

Single primitivity is an evs property \cite{spri}.
\end{Def}

\begin{Def} \cite{spri} An evs $X$ is said to be a \emph{comparable evs} if $\forall x,y\in X$, $P_x=P_y\Rightarrow$ $x$ and $y$ are comparable with respect to the partial order of $X$.
	
	This is also an evs property \cite{spri}.
\end{Def}

We now give some examples of exponential vector space to build up some counter examples of the main section. 

\begin{Ex}\cite{qvs}\label{e:prod}
	{\bf(Arbitrary product of exponential vector spaces)} Let $\{X_i:i\in\Lambda\}$ be an arbitrary family of exponential vector spaces over a common field $K$ and $X:=\displaystyle\prod_{i\in\Lambda}X_i$ be the Cartesian product. Then, $X$ becomes an exponential vector space over $K$ with respect to the following operations and partial order :
	
	For $x=(x_i)_i, y=(y_i)_i\in X$ and $\alpha\in K$ we define (i) $x+y:=(x_i+y_i)_i$, (ii) $\alpha x:=(\alpha x_i)_i$, (iii) $x\ll y$ if $x_i\leq y_i$, $\forall\,i\in\Lambda$.\\
	Here the notation $x=(x_i)_i\in X$ means that the point $x\in X$ is the map $x:i\mapsto x_i\, (i\in\Lambda)$, where $x_i\in X_i,\ \forall\,i\in\Lambda$. The additive identity of $X$ is given by $\theta=(\theta_i)_i$, $\theta_i$ being the additive identity in $X_i$. Also the primitive space of $X$ is given by $X_0=\displaystyle\prod_{i\in\Lambda}[X_i]_0$.
	
	This product space $X$ becomes a topological exponential vector space over the field $\K$ whenever each factor space $X_i$ is a topological evs over $\K$ and $X$ is endowed with the product topology, which is the weakest topology on $X$ so that each projection map $p_i:X\longrightarrow X_i$ given by $p_i:x\longmapsto x_i$ is continuous.
	
	Thus for any cardinal number $\beta$, $[0,\infty)^{\beta}$ becomes a topological evs.
\end{Ex}

\begin{Ex}
	Let $X$ be an evs over the field $\K$ (either $\R$ or $\C$) and $V$ be a vector space
	over the same field $\K$. We now give operations on $X\times V$ like $[0,\infty)\times V$, i.e.\\
	for $(x_1,e_1),(x_2,e_2),(x,e)\in X\times V$ and $\alpha\in\K$\\
	(i) $(x_1,e_1)+(x_2,e_2):=(x_1+x_2,e_1+e_2)$.\\
	(ii) $\alpha(x,e):=(\alpha x,\alpha e)$.\\ The partial order `$\leq$' is defined as : $(x_1,e_1)\leq(x_2,e_2)$ iff
	$x_1\leq x_2$ and $e_1=e_2$. Then $X\times V$ becomes an evs over the field $\K$. Justification of this is straight forward.\label{ch3:e:xxe}\end{Ex}

\begin{Ex} Let $\com{\X}{\theta}$ be the collection of all compact subsets of a Hausd\"{o}rff topological
	vector space $\X$ containing $\theta$ (the identity in $\X$). So $\com{\X}{\theta}\subseteq \com{\X}{}$.
	If we take any two members $A,B\in\com{\X}{\theta}$ and any $\alpha\in \K$ then
	$\alpha A+ B\in \com{\X}{\theta}$. Again $[\com{\X}{\theta}]_{0}=\big\{\{\theta\}\big\}$=
	$[\com{\X}{}]_0\cap \com{\X}{\theta}$. For any $A\in \com{\X}{\theta}$,
	$\{\theta\}\subseteq A$. This shows that $\com{\X}{\theta}$ is a subevs of
	$\com{\X}{}$ [by note \ref{n:subevs}].
	\label{e:topcthetax}\end{Ex}

\begin{Ex}\cite{spri}\label{ch3:e:LX} Let $\X$ be a vector space over the field $\K$ of real or complex numbers. Let
	$\mathscr L(\X)$ be the set of all linear subspaces of $\X$. We now define $+,\cdot,\leq$ on $\mathscr L(\X)$ as follows :
	For $\X_1,\X_2\in\mathscr L(\X)$ and $\alpha\in\K$ define\\
	(i) $\X_1+\X_2:=\s(\X_1\cup \X_2)$, (ii) $\alpha\cdot \X_1:=\X_1$, if $\alpha\neq0$
	and $\alpha\cdot \X_1:=\{\theta\}$, if $\alpha =0$
	($\theta$ being the additive identity of $\X$), (iii) $\X_1\leq \X_2$ iff 
	$\X_1\subseteq \X_2$.
	
	Then $\big(\mathscr L(\X),+,\cdot,\leq\big)$ is an exponential vector space over $\K$.
	
	Since every element of $\mathscr L(\X)$ is an idempotent 
	\big[$\because \X_1+\X_1=\X_1,$ for all $\X_1\in \mathscr L(\X)$\big] we 
	can say that there is no topology with respect to which $\mathscr L(\X)$ can
	be a topological evs \big[Since a topological evs cannot contain any
	idempotent element, as follows from the Result \ref{ch1:r:idem}\big].\qed
\end{Ex}

\begin{Ex}\cite{bal}\label{ch4:e:d2inf}
	Let us consider $\mathscr D^2([0,\infty)):=[0,\infty)\times [0,\infty)$.
	We define $+,\cdot,\leq$ on $\mathscr D^2([0,\infty))$ as follows :\\
	For $(x_1,y_1),(x_2,y_2)\in\mathscr D^2([0,\infty))$ and $\alpha\in\C$ we define\\
	(i) $(x_1,y_1)+(x_2,y_2)=(x_1+x_2,y_1+y_2)$\\
	(ii) $\alpha\cdot(x_1,y_1)=(|\alpha|x_1,|\alpha|y_1)$\\
	(iii) $(x_1,y_1)\leq (x_2,y_2)\Longleftrightarrow$ either $x_1<x_2$ or if
	$x_1=x_2$ then $y_1\leq y_2$ [\emph{dictionary order}]\\
	Then $\big(\mathscr D^2([0,\infty)),+,\cdot,\leq\big)$ becomes a non-topological exponential vector space over the complex field $\C$.
\end{Ex}

\begin{Not}\cite{bal}\label{ch4:n:dxi} For a \emph{\textbf{well-ordered}} set $I$  and an evs $X$, if we consider $\mathscr D(X:I):=X^I$ then also,
	like above example, it forms a non-topological evs with dictionary order. If $I=\{1,2,\dots,n\}$
	we shall usually denote the evs $\mathscr D(X:I)$ as $\mathscr D^n(X)$.  We can also generalise
	this by taking different evs i.e. $\mathscr D(X_\alpha:\alpha \in I):=\displaystyle \prod_{\alpha\in I} X_\alpha$, which
	also becomes a non-topological evs with dictionary order.\end{Not}

\section{Basis and dimension : General discussion}

In this section we have introduced the concepts of basis and dimension of an exponential vector space. These concepts are different from those already in a vector space. Like vector
space it is not true that every evs contains a basis, rather it behaves like a
module in this respect. We have found a necessary as well as sufficient condition
for an evs to have a basis. Finally, we have computed basis and dimension of 
some particular evs.

\begin{Def}Let $X$ be an evs over the field $K$ and $x\in X\smallsetminus X_0$. Define\\
	\centerline{$L(x):=\{z\in X: z\geq \alpha x+p, \alpha \in K^*, p\in X_0\},\ 
		\text{ where } K^*\equiv K\smallsetminus \{0\}$} We name these sets $L(x)$ for
	different $x$'s in $X\smallsetminus X_0$ as \emph{testing sets} of $X$.
	\label{ch5:d:lx}\end{Def}

We discuss below some properties of $L(x)$. First of all note 
that $L(x)=\uparrow( K^*x+X_0)$.

\begin{Prop} {\em(i)} $\forall\,x\in X\smallsetminus X_0$, $x\in L(x)$ and 
	$\uparrow L(x)=L(x)$.\\
	{\em(ii)} $x\leq y\ (x,y\in X\smallsetminus X_0)$ $\Rightarrow$ $L(x)\supseteq L(y)$.\\
	{\em(iii)} If  $x=\alpha y+p$ for some $\alpha \in  K^*$, $p\in X_0$ and
	$y\in X\smallsetminus X_0$, then $L(x)=L(y)$.\\
	{\em(iv)} $L(x)\cap X_0=\emptyset$.\\
	{\em(v)} If $a\in L(b)$ then $L(a)\subseteq L(b)$.\\
	{\em(vi)} For any $x,y\in X\smallsetminus X_0$, $L(x)\cap L(y)\neq \emptyset$.
	\label{ch5:p:L(x)}\end{Prop}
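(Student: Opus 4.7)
The six parts form a routine battery of checks on the operator $L$; I would handle them in the stated order, since later parts reuse the algebraic bookkeeping from earlier ones. Parts (i), (ii), (v) are essentially immediate. For (i), choosing $\alpha=1$ and $p=\theta$ gives $x\geq 1\cdot x+\theta$, so $x\in L(x)$, and $\uparrow L(x)=L(x)$ follows because $L(x)=\uparrow(K^*x+X_0)$ and the up-closure operator is idempotent by reflexivity and transitivity of $\leq$. For (ii), given $z\in L(y)$ with $z\geq\alpha y+p$, axiom $A_2$ yields $\alpha x\leq\alpha y$, and adding $p$ on both sides gives $z\geq\alpha x+p$, so $z\in L(x)$. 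For (v), given $a\geq\alpha b+p$ and $z\geq\beta a+q$, I apply $A_2$ to scale the first inequality by $\beta$, then use $A_3$(i)--(ii) to rewrite $\beta(\alpha b+p)=(\beta\alpha)b+\beta p$, chaining to $z\geq(\beta\alpha)b+(\beta p+q)$, where $\beta\alpha\in K^*$ and $\beta p+q\in X_0$ because $X_0$ is a vector space.

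The main obstacle is part (iii), specifically the inclusion $L(y)\subseteq L(x)$. The forward direction $L(x)\subseteq L(y)$ is a direct substitution: from $z\geq\beta x+q$ and $x=\alpha y+p$, axioms $A_3$(i)--(ii) rewrite $\beta x$ as $(\beta\alpha)y+\beta p$, so $z\geq(\beta\alpha)y+(\beta p+q)$ and hence $z\in L(y)$. To reverse, I have to ``solve'' $x=\alpha y+p$ for $y$ inside an evs, which is not automatic; the crucial point is that primitives admit additive inverses via axiom $A_5$. Adding $(-1)\cdot p$ to both sides of $x=\alpha y+p$ and using $p+(-1)\cdot p=\theta$ produces $\alpha y=x+(-1)\cdot p$, and scaling by $\alpha^{-1}$ yields $y=\alpha^{-1}x+(-\alpha^{-1})\cdot p$---an expression of exactly the same shape with $x$ and $y$ interchanged. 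The forward argument, applied to this identity, delivers $L(y)\subseteq L(x)$.

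The same $A_5$ trick closes (iv): if some $z\in L(x)\cap X_0$ exists, then $\alpha x+p\leq z$ and the minimality of $z$ in $X$ forces $\alpha x+p=z\in X_0$; adding $(-1)\cdot p$ again gives $\alpha x=z+(-1)\cdot p\in X_0$, so $x=\alpha^{-1}(\alpha x)\in X_0$ because $X_0$ is closed under scalar multiplication, contradicting $x\in X\smallsetminus X_0$. Finally, (vi) is witnessed by the candidate $z:=x+y$: axiom $A_6$ supplies primitives $p\leq y$ and $q\leq x$, and monotonicity of addition then gives $x+y\geq 1\cdot x+p$ and $x+y\geq 1\cdot y+q$ simultaneously, placing $z$ in both $L(x)$ and $L(y)$.
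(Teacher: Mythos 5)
Your proof is correct and follows essentially the same route as the paper's: the same witness $x+y$ for (vi), the same minimality argument for (iv), and the same "solve for $y$ using the vector-space structure of $X_0$" step that drives (iii). The only cosmetic difference is that you prove (v) by a direct chaining computation, whereas the paper deduces it from (ii) and (iii) via $L(a)\subseteq L(\alpha b+p)=L(b)$; both are fine.
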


\begin{proof}\textbf{(i)} Immediate from definition.
	
	\textbf{(ii)} Let $z\in L(y)$ $\Rightarrow$ $\exists$ $\alpha \in K^*$ and $p\in X_0$
	such that $\alpha y+p\leq z$. Now $x\leq y$ $\Rightarrow$ $\alpha x+p\leq \alpha y+p\leq z$ $\Rightarrow$ $z\in L(x)$.
	
	\textbf{(iii)} As $y\in X\smallsetminus X_0$, so $x\in X\smallsetminus X_0$.
	Therefore we can talk about $L(x)$.
	Let $z\in L(y)$ $\Rightarrow$ $\exists$ $\alpha_z\in  K^*$ and $p_z\in X_0$ such that
	$\alpha_zy+p_z\leq z$ $\Rightarrow$ $\alpha_z\alpha^{-1}(x-p)+p_z\leq z$ $\Rightarrow$
	$\alpha_z\alpha^{-1}x+(p_z-\alpha_z\alpha^{-1}p)\leq z$ $\Rightarrow$ $z\in L(x)$.
	Therefore $L(y)\subseteq L(x)$. Again $x=\alpha y+p$ $\Rightarrow$ $y=\alpha^{-1}(x-p)$.
	So by above argument we also have $L(x)\subseteq L(y)$. Thus $L(x)=L(y)$.
	
	\textbf{(iv)}  Let $y\in L(x)$ $\Rightarrow$ $\exists$ $\alpha \in  K^*$ and $p\in X_0$ such that $\alpha x+p\leq y$. 
	If $y\in X_0$ then
	$\alpha x+p\in X_0$ $\Rightarrow$ $x\in X_0$. This contradiction proves that $L(x)\cap X_0=\emptyset$.
	
	\textbf{(v)} $a\in L(b)$ $\Rightarrow$ $a\in X\smallsetminus X_0$ and $\exists\,\alpha\in  K^*$, $p\in X_0$
	such that $\alpha b+p\leq a$ $\Rightarrow$ $L(a)\subseteq L(\alpha b+p)=L(b)$
	[by (ii) and (iii) above].
	
	\textbf{(vi)} For any $p\in X_0$ with $p\leq y$, $x+p\leq x+y$ $\Rightarrow$ $x+y\in L(x)$.
	Similarly we can say that $x+y\in L(y)$. So, $x+y\in L(x)\cap L(y)$ $\Rightarrow$ $L(x)\cap L(y)\neq\emptyset$.
\end{proof}

\begin{Def}A subset $B$ of $X\smallsetminus X_0$ is said to be a 
	\emph{generator} of $X\smallsetminus X_0$
	if\\ \centerline{$X\smallsetminus X_0=\displaystyle \bigcup_{x\in B} L(x)$}\end{Def}

\begin{Not} The set $X\smallsetminus X_0$ always generates $X\smallsetminus X_0$. So generator always exists for $X\smallsetminus X_0$.
	It is clear that any superset of a generator of $X\smallsetminus X_0$ is also a generator of $X\smallsetminus X_0$.
\end{Not}

\begin{Def} Two elements $x,y\in X\smallsetminus X_0$ are said to be 
	\emph{orderly dependent}
	if either $x\in L(y)$ or $y\in L(x)$. \end{Def}

\begin{Def} Two elements $x,y\in X\smallsetminus X_0$ are said to be
	\emph{orderly independent}
	if they are not orderly dependent i.e neither $x\in L(y)$ nor $y\in L(x)$.
	
	A subset $B$ of $X\smallsetminus X_0$ is said to be \emph{orderly independent} if
	any two members $x,y \in B$ are orderly independent.\end{Def}

\begin{Rem} Let $Y$ be a subevs of an evs $X$. Then any two orderly dependent elements of
	$Y\smallsetminus Y_0$ are also orderly dependent in $X\smallsetminus X_0$ because of the
	fact $Y_0\subseteq X_0$. In other words, any two elements of $Y\smallsetminus Y_0$
	which are orderly independent in $X\smallsetminus X_0$ are
	also orderly independent in $Y\smallsetminus Y_0$. But converse is not true in general i.e two orderly
	independent elements in $Y\smallsetminus Y_0$ may not be orderly independent in $X\smallsetminus X_0$
	[in contrast to the case of linear independence in vector space].
	For example, $\{0, 2,5\}$ and $\{0,-2,3\}$ are orderly independent in
	$\com{\R}{\theta}$ [see Example \ref{e:topcthetax}], since $\nexists$ any $\alpha \in \R^*$ such that $\alpha \{0,2,5\}\subseteq \{0,-2,3\}$
	or $\alpha \{0,-2,3\}\subseteq \{0,2,5\}$ \big[Here $[\com{\R}{\theta}]_0=\big\{\{0\}\big\}$ \big].
	But these two elements are not orderly independent in $\com{\R}{}$, as
	we can write $\{0,-2,3\}=\{0,2,5\}+\{-2\}$, where $\{-2\}\in [\com{\R}{}]_0$.
	
	In the above context it should thus be noted that while discussing the orderly
	independence of two elements of a subevs $Y$ of an evs $X$, there are two types
	of orderly independence ------ one with respect to $Y$ and the other with respect
	to $X$; while considering orderly independence with respect to $Y$ the testing sets
	should be of the form\\ \centerline{$L_Y(y):=\{z\in Y: z\geq \alpha y+p, \alpha \in  K^*, p\in Y_0\}$ for 
		any $y\in Y\smallsetminus Y_0$,} and when considering orderly independence with
	respect to $X$ the testing sets must be of the form\\
	\centerline{$L_X(y):=\{z\in X: z\geq \alpha y+p, \alpha \in  K^*, p\in X_0\}$ for 
		any $y\in Y\smallsetminus Y_0$.} Since $Y_0\subseteq X_0$ we have $L_Y(y)\subseteq L_X(y)$,
	for any $y\in Y\smallsetminus Y_0$. Thus it follows that an orderly independent
	set in $Y\smallsetminus Y_0$ need not be orderly independent in $X\smallsetminus X_0$.
	However a set $B\,(\subseteq Y\smallsetminus Y_0)$ which is orderly independent in
	$X\smallsetminus X_0$ must be so in $Y\smallsetminus Y_0$.
	\label{ch5:rm:suboi}\end{Rem}

\begin{Def} A subset $B$ of $X\smallsetminus X_0$ is said to be a \emph{basis} of $X\smallsetminus X_0$
	if $B$ is orderly independent and  generates $X\smallsetminus X_0$.\end{Def}

\begin{Not} For each $x\in X$ either $x\in X_0$ or $x\in X\smallsetminus X_0$. If
	$x\in X_0$ then it can be expressed as a finite linear combination of some
	basic vectors of some basis of $X_0$ [as a vector space]. If $x\in X\smallsetminus X_0$
	then there exists a member of some basis [if exists] of $X\smallsetminus X_0$
	which generates $x$. So we can say that
	to represent an evs $X$ it is necessary to consider a basis of $X\smallsetminus X_0$
	together with a basis of $X_0$ [in the sense of vector space]. Thus a basis of
	an evs $X$ should be composed of two components, one for $X_0$ and the other for 
	$X\smallsetminus X_0$. To express this fact in an easiest way we shall represent a
	basis of an evs $X$ as $[B:B_0]$, where $B$ is a basis of $X\smallsetminus X_0$
	and $B_0$ is a basis of $X_0$ [as a vector space]. If for an evs $X$, $X_0=\{\theta\}$
	then we shall consider $B_0=\{\theta\}$, since in that case $X_0$ has no basis.
\end{Not}

\begin{Th} For a topological evs $X$, $X\smallsetminus X_0$ either has no 
	basis or has uncountably many bases.\end{Th}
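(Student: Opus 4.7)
The plan is to start from any basis $B$ of $X\smallsetminus X_0$ and manufacture a continuum of new bases by replacing one chosen element of $B$ with its nonzero scalar multiples. Assume $B\neq\emptyset$ (otherwise $X=X_0$ and the statement is vacuous), fix $b\in B$, and for each $\alpha\in K^*$ define $B_\alpha:=(B\smallsetminus\{b\})\cup\{\alpha b\}$. First I would record that $\alpha b\in X\smallsetminus X_0$, since $X_0$ is closed under scalar multiplication, and that $L(\alpha b)=L(b)$ by Proposition \ref{ch5:p:L(x)}(iii).

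Next I would verify that each $B_\alpha$ is itself a basis of $X\smallsetminus X_0$. Generation is immediate from $L(\alpha b)=L(b)$, since then $\bigcup_{x\in B_\alpha}L(x)=\bigcup_{x\in B}L(x)=X\smallsetminus X_0$. For orderly independence the only new pairs to check are $(\alpha b,b')$ with $b'\in B\smallsetminus\{b\}$: if $\alpha b\in L(b')$, Proposition \ref{ch5:p:L(x)}(v) would give $L(b)=L(\alpha b)\subseteq L(b')$, whence $b\in L(b')$, contradicting orderly independence of $B$; the symmetric relation $b'\in L(\alpha b)=L(b)$ is excluded for the same reason. Exactly this argument also shows $\alpha b\notin B\smallsetminus\{b\}$ for any $\alpha\in K^*$, so $B_\alpha=B_\beta$ precisely when $\alpha b=\beta b$.

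The problem then reduces to showing that the orbit $\{\alpha b:\alpha\in K^*\}$ is uncountable, and this is the only step which genuinely uses the topological hypothesis. Let $H:=\{\alpha\in K^*:\alpha b=b\}$; by continuity of scalar multiplication and Hausdorffness of $X$, $H$ is a closed subgroup of $K^*$, and $|\{\alpha b:\alpha\in K^*\}|=|K^*/H|$. Suppose for contradiction that $K^*/H$ is countable. Then $K^*$ is a countable disjoint union of closed cosets of $H$; since $K^*$ is a locally compact Hausdorff, hence Baire, space, some coset, and therefore $H$ itself, has nonempty interior, so $H$ is an open subgroup of $K^*$. An open subgroup of a topological group contains its identity component, so for $K=\R$ or $\C$ we obtain $H\supseteq(0,\infty)$, i.e.\ $\alpha b=b$ for every $\alpha>0$. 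Continuity of $\alpha\mapsto\alpha b$ at $0$ together with axiom $A_4$ then forces $b=\lim_{\alpha\to 0^+}\alpha b=0\cdot b=\theta$, contradicting $b\in X\smallsetminus X_0$. Hence $|K^*/H|$ is uncountable and $\{B_\alpha:\alpha\in K^*\}$ yields uncountably many pairwise distinct bases.

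The main obstacle is exactly this last paragraph; the earlier steps are bookkeeping with Proposition \ref{ch5:p:L(x)}. The difficulty lies in extracting enough cardinality from the scalar action, which cannot be done without simultaneously invoking continuity of scalar multiplication, the axiom $0\cdot b=\theta$, and the connectedness of the identity component of $K^*$.
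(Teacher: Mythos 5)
Your argument is correct, and each $B_\alpha:=(B\smallsetminus\{b\})\cup\{\alpha b\}$ is indeed a basis by exactly the bookkeeping you describe with Proposition \ref{ch5:p:L(x)}; but the decisive step --- extracting uncountability from the scalar action --- is done quite differently from the paper. The paper rescales the \emph{whole} basis, setting $G_\alpha:=\{\alpha x:x\in B\}$, and shows directly that $G_\alpha\neq G_\beta$ whenever $|\alpha|<|\beta|$: equality would force $\alpha x=\beta x$ for some $x\in B$, hence $(\alpha\beta^{-1})^nx=x$ for all $n\in\N$, and since $|\alpha\beta^{-1}|<1$ continuity of scalar multiplication gives $x=\theta$ in the limit, a contradiction. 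That yields an explicit uncountable family indexed by the moduli in $(0,\infty)$, using nothing beyond continuity and $A_4$. You instead pass to the stabilizer $H=\{\alpha\in K^*:\alpha b=b\}$, show it is a closed subgroup, and rule out countable index by Baire category plus the fact that an open subgroup contains the identity component $(0,\infty)$, after which continuity at $0$ forces $b=\theta$. Both routes are sound; yours invokes heavier machinery (Baire category, clopen subgroups, connectedness of the identity component of $K^*$) where the paper's iteration trick is more elementary and produces the distinct bases explicitly, but your stabilizer formulation is the more conceptual one and isolates precisely what is needed from the topology. Two minor remarks: the degenerate case $X=X_0$ is not really ``vacuous'' --- there $\emptyset$ is the unique basis of $X\smallsetminus X_0=\emptyset$ --- but the paper silently ignores this as well; and the paper additionally exhibits the translated bases $H_p=\{x+p:x\in B\}$ for $p\in X_0$, which your argument does not need.
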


\begin{proof}Let $B$ be a basis of $X\smallsetminus X_0$. Then $G_\alpha:=\{\alpha x:x\in B\}$
	and $H_p:=\{x+p:x\in B\}$ are also bases of $X\smallsetminus X_0$, for any
	$\alpha \in \K^*$ and any $p\in X_0$. This holds because of the result
	$L(x)=L(\alpha x+p)$ [proposition \ref{ch5:p:L(x)}]. If $G_\alpha=G_\beta$ for 
	any $\alpha,\beta\in\K^*$ then $\alpha x=\beta x$, $\forall\,x\in B$ 
	[$\because$ $\alpha x\neq\beta y$ for any $x,y\in B$ as $B$ is orderly independent].
	If we choose $\alpha,\beta\in\K^*$ such that $|\alpha|<|\beta|$ then using 
	continuity of the scalar multiplication of the topological evs $X$ we must have
	$x=\theta$ [$\because$ $\alpha x=\beta x$ $\Rightarrow$ 
	$(\alpha\beta^{-1})^nx=x$, $\forall\,n\in\N$ which implies by taking limit 
	$n\to\infty$ that $x=\theta$, as $|\alpha\beta^{-1}|<1$] --- a contradiction.
	Thus it follows that for any $\alpha,\beta\in\K^*$ with $|\alpha|<|\beta|$ we
	must have $G_\alpha\neq G_\beta$. This immediately justifies that there are 
	uncountably many bases of $X\smallsetminus X_0$.
	
	If $X_0$ contains more than one element then for $p,q\in X_0$
	we may consider $H_p,H_q$. If $H_p=H_q$ then $B$ being orderly independent we
	must have $x+p=x+q$, $\forall\,x\in B$. Then by result \ref{ch1:r:idem} it follows 
	that $p=q$. Since $X$ is a topological evs, $X_0$ is a Hausd\"{o}rff topological
	vector space. So if $X_0\neq\{\theta\}$ then it must be uncountable and hence
	ensures the existence of uncountably many bases of $X\smallsetminus X_0$. 
\end{proof}

For a non-topological evs it may so happen that $G_\alpha=B$ for every
$\alpha \in K^*$ [this will be discussed in the next section]. However, an evs (topological or not) need not have a basis. We show in the next 
section that the evs $\mathscr D\big([0,\infty):\N\big)$ discussed in Note \ref{ch4:n:dxi} cannot have a basis. The following result shows that having basis is an evs property.

\begin{Res} Let $\phi:X\longrightarrow Y$ be an order-isomorphism. Then\\
	\emph{(1)} for any generator $B$ of $X\smallsetminus X_0$,
	$\phi(B)$ is a generator of $Y\smallsetminus Y_0$.\\
	\emph{(2)} for any orderly independent subset $B$ of $X\smallsetminus X_0$, 
	$\phi(B)$ is also an orderly independent subset of $Y\smallsetminus Y_0$.
	
	Thus, for a basis $B$ of $X\smallsetminus X_0$, $\phi(B)$ becomes a basis of 
	$Y\smallsetminus Y_0$. \label{ch5:r:basisiso}\end{Res}

\begin{proof}
	(1) $B\subseteq X\smallsetminus X_0$ $\Rightarrow$ $\phi(B)\subseteq Y\smallsetminus Y_0$
	[As $\phi(X_0)=Y_0$]. Let $y\in Y\smallsetminus Y_0$ $\Rightarrow$
	$\phi^{-1}(y)\in X\smallsetminus X_0$ $\Rightarrow$ $\exists$ $b\in B$ and
	$\alpha \in  K^*$, $p\in X_0$
	such that $\phi^{-1}(y)\geq \alpha b+p$ $\Rightarrow$ $y\geq \alpha \phi(b)+\phi(p)$
	$\Rightarrow$ $y\in L(\phi(b))\subseteq \displaystyle \bigcup_{b\in B}L(\phi(b))$.
	Therefore $Y\smallsetminus Y_0\subseteq \displaystyle \bigcup_{b\in B}L(\phi(b))$.
	Again by the proposition \ref{ch5:p:L(x)}, $L(\phi(b))\cap Y_0=\emptyset$, $\forall\,b\in B$.
	So $Y\smallsetminus Y_0=\displaystyle \bigcup_{b\in B}L(\phi(b))$. Thus 
	$\phi(B)$ is a generator of $Y\smallsetminus Y_0$.
	
	(2) We first show that for any two orderly dependent members $y_1,y_2 $ of
	$Y\smallsetminus Y_0$, $\phi^{-1}(y_1),\phi^{-1}(y_2)$ are orderly dependent
	in $X\smallsetminus X_0$. As $y_1,y_2$ are orderly dependent so without
	loss of generality we can take $y_1\in L(y_2)$ $\Rightarrow$ $\exists$ $\alpha \in  K^*$ and
	$p\in Y_0$ such that $\alpha y_2+p\leq y_1$. Then  $\phi^{-1}$ also being an 
	order-isomorphism we have 
	$\phi^{-1}(\alpha y_2+p)\leq \phi^{-1}(y_1)$ $\Rightarrow$
	$\alpha \phi^{-1}(y_2)+\phi^{-1}(p)\leq \phi^{-1}(y_1)$ $\Rightarrow$ 
	$\phi^{-1}(y_1)\in L(\phi^{-1}(y_2))$ [as $\phi^{-1}(p) \in X_0$].
	This justifies our assertion. Then contra-positively, the result follows.\end{proof}

The next theorem characterises a basis (if exists) of $X\smallsetminus X_0$, 
for any evs $X$.

\begin{Th} A subset of $X\smallsetminus X_0$ is a basis of $X\smallsetminus X_0$ iff 
	it is a minimal generating subset of $X\smallsetminus X_0$.
	\emph{\big[Here minimal generating subset $B$ of $X\smallsetminus X_0$ means there does
		not exist any proper subset of $B$ which can generate $X\smallsetminus X_0$.\big]}
\end{Th}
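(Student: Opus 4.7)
The plan is to prove the two implications separately, using Proposition \ref{ch5:p:L(x)} as the main technical tool.

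For the forward direction, suppose $B$ is a basis of $X\smallsetminus X_0$. I want to show it is minimal generating. Assume for contradiction that there is a proper subset $B'\subsetneq B$ which still generates $X\smallsetminus X_0$. Pick any $b\in B\smallsetminus B'$. Since $b\in X\smallsetminus X_0=\bigcup_{b'\in B'}L(b')$, there is some $b'\in B'$ with $b\in L(b')$. Because $b\notin B'$ while $b'\in B'$, the elements $b$ and $b'$ are distinct members of $B$, and $b\in L(b')$ witnesses that they are orderly dependent, contradicting the orderly independence of $B$. This gives minimality.

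For the converse, suppose $B$ is a minimal generating subset of $X\smallsetminus X_0$; I must show $B$ is orderly independent. Suppose not: then there are distinct $x,y\in B$ with, say, $x\in L(y)$. The key step is to remove $x$ from $B$ and check that $B\smallsetminus\{x\}$ still generates $X\smallsetminus X_0$. This uses part (v) of Proposition \ref{ch5:p:L(x)}: $x\in L(y)$ forces $L(x)\subseteq L(y)$, so every element of $X\smallsetminus X_0$ previously covered by $L(x)$ is already covered by $L(y)$, and $y$ remains in $B\smallsetminus\{x\}$. Hence $B\smallsetminus\{x\}$ is a proper generating subset of $B$, contradicting minimality.

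The argument is essentially a direct translation of the classical vector-space proof (where spanning replaces generation and linear independence replaces orderly independence), and the only nontrivial ingredient is the transitivity-like property $L(x)\subseteq L(y)$ whenever $x\in L(y)$; everything else is bookkeeping about membership. I do not expect a significant obstacle here, since both Proposition \ref{ch5:p:L(x)}(v) and the definitions of basis and generator have already been set up to make the replacement of ``linear span'' by ``union of testing sets'' go through cleanly.
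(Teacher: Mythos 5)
Your proof is correct and follows essentially the same route as the paper's: the forward direction is the paper's argument phrased as a contradiction (an uncovered $b\in B\smallsetminus B'$ forces an orderly dependence within $B$), and the converse is identical, using Proposition \ref{ch5:p:L(x)}(v) to show that a dependent element can be deleted without losing generation, contradicting minimality.
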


\begin{proof} Let us suppose $B$ be a basis of $X\smallsetminus X_0$. Then 
	$B$ generates $X\smallsetminus X_0$.
	Now $B$ being an orderly independent subset of $X\smallsetminus X_0$, if we 
	take an element $x\in B$ then $\forall\,y\in B\smallsetminus \{x\}$,
	$x$ and $y$ are orderly independent. Therefore $x\notin L(y)$, $\forall\,y\in B\smallsetminus \{x\}$. This shows that
	$B\smallsetminus \{x\}$ cannot generate $X\smallsetminus X_0$ and this holds
	for any $x\in B$. Therefore $B$ is a minimal generator of $X\smallsetminus X_0$.
	
	Conversely, suppose $B$ be a minimal generator of $X\smallsetminus X_0$. For 
	any two members $x,y\in B$ if $x\in L(y)$ 
	then by proposition \ref{ch5:p:L(x)}, $L(x)\subseteq L(y)$ $\Longrightarrow$ 
	$B\smallsetminus \{x\}$ also generates $X\smallsetminus X_0$, 
	which contradicts that $B$ is a minimal generator of $X\smallsetminus X_0$. 
	Again, if $y\in L(x)$ we get similar contradiction. So neither $x\in L(y)$ nor
	$y\in L(x)$ $\Longrightarrow$ $x,y$ are orderly independent.
	Arbitrariness  of $x,y$ shows that $B$ is an orderly independent subset of 
	$X\smallsetminus X_0$. Consequently, $B$ is a basis of $X\smallsetminus X_0$.
\end{proof}

\begin{Res} Every basis of $X\smallsetminus X_0$ is a maximal orderly independent
	subset of $X\smallsetminus X_0$. \emph{\big[Here maximal orderly independent 
		subset $B$ of $X\smallsetminus X_0$ means there does
		not exist any orderly independent subset of $X\smallsetminus X_0$ containing 
		$B$.\big]}\label{ch5:r:max}\end{Res}

\begin{proof}Let $B$ be a basis of $X\smallsetminus X_0$. Then for any 
	$x\in X\smallsetminus (B\cup X_0)$, $\exists$ $b\in B$ such that 
	$x\in L(b)$. This shows that $B\cup \{x\}$ is not orderly independent. Thus
	$B$ is maximal orderly independent in $X\smallsetminus X_0$.\end{proof}

Converse of above result is not true in general i.e maximal orderly independent
subset of $X\smallsetminus X_0$ may not be a basis of $X\smallsetminus X_0$.
For example, in the evs $\com{\X}{\theta}$ [discussed in \ref{e:topcthetax}]let us consider
the collection\\ \centerline{$\mathscr G:=\big\{A\in\com{\X}{\theta}:A\text{ consists of three distinct 
		elements of }\X\big\}$} Then $\mathscr G\subset\com{\X}{\theta}\smallsetminus\big\{\{\theta\}\big\}$.
Now we define a relation `$\thicksim$' on $\mathscr G$ by ``$A\thicksim B$ iff 
$A=\alpha B$ for some $\alpha\in\K^*$''. Then this relation becomes an equivalence relation on 
$\mathscr G$. Let us consider the subcollection $\mathscr H$ of $\mathscr G$
taking exactly one member from each equivalence class produced by the equivalence
relation `$\thicksim$'. Then $\mathscr H$ becomes an orderly
independent subset of $\com{\X}{\theta}\smallsetminus\big\{\{\theta\}\big\}$,
because any two elements $A,B\in\mathscr G$ are orderly dependent iff $A=\alpha B$
for some $\alpha\in\K^*$ and hence belong to the same 
equivalence class. For any member $C\in \com{\X}{\theta}\setminus (\mathscr H\cup [\com{\X}{\theta}]_0 )$
if $\cd C\geq 3$ then there exists a member $A_C\in \mathscr H$ and
$\alpha \in \K^*$ such that $\alpha A_C\subseteq C$.  If $\cd C=2$ then also 
there exists $\beta \in \K^*$ and $A_C\in \mathscr H$
such that $C\subseteq\beta A_C$. This shows that $\mathscr H\cup\{C\}$ is orderly
dependent. So we can say that $\mathscr H$ forms a maximal orderly independent set
in $\com{\X}{\theta}\smallsetminus\big\{\{\theta\}\big\}$.
But it does not generate $\com{\X}{\theta}\smallsetminus\big\{\{\theta\}\big\}$.
In fact, for any $D\in \com{\X}{\theta}$ with $\cd D=2$ there does not exist any
$A\in \mathscr H$ such that $D\in L(A)$, since each member of $L(A)$ contains 
three or more elements of $\X$. Hence $\mathscr H$ cannot be a basis of
$\com{\X}{\theta}\smallsetminus[\com{\X}{\theta}]_0$ although it is maximal orderly
independent \big[here note that $[\com{\X}{\theta}]_0=\big\{\{\theta\}\big\}$\big].

\begin{Rem} If $A$ is an orderly independent set in $X\smallsetminus X_0$ then for any 
	$a_1,a_2\in A$ with $a_1\neq a_2$ neither $a_1\in L(a_2)$ nor $a_2\in L(a_1)$.
	In other words, if $a_1\in L(a_2)$ for some $a_1,a_2\in A$ then $a_1=a_2$. Moreover
	any two elements of an orderly independent set $A$ must be incomparable with
	respect to the partial order `$\leq$' of the evs $X$; in fact, $x\leq y$ 
	$\Rightarrow$ $y\in L(x)$.\label{ch5:rm:oi}\end{Rem}

\begin{Lem} Let $A$ and $B$ be two bases of $X\smallsetminus X_0$. Then for
	any $a\in A$, there exists one and only one $b_a\in B$
	such that $L(a)= L(b_a)$.\label{ch5:l:card}\end{Lem}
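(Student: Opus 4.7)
The plan is to use the two generation properties (one for each basis) to sandwich the testing set $L(a)$ between two testing sets of basis elements, and then invoke orderly independence to collapse the sandwich.

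First, I would establish existence. Fix $a \in A \subseteq X \smallsetminus X_0$. Since $B$ generates $X \smallsetminus X_0$, there is some $b \in B$ with $a \in L(b)$. Proposition \ref{ch5:p:L(x)}(v) then yields $L(a) \subseteq L(b)$. Applying the same reasoning with the roles of $A$ and $B$ swapped to the element $b \in X \smallsetminus X_0$, I get some $a' \in A$ with $b \in L(a')$, and hence $L(b) \subseteq L(a')$. Chaining the two inclusions gives
\[
L(a) \subseteq L(b) \subseteq L(a').
\]
In particular, $a \in L(a)$ by Proposition \ref{ch5:p:L(x)}(i), so $a \in L(a')$ with both $a, a' \in A$. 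Since $A$ is orderly independent, Remark \ref{ch5:rm:oi} forces $a = a'$, and therefore all three sets in the sandwich coincide: $L(a) = L(b)$. So $b_a := b$ works.

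For uniqueness, suppose $b_1, b_2 \in B$ both satisfy $L(a) = L(b_1) = L(b_2)$. Then $b_1 \in L(b_1) = L(b_2)$, so by orderly independence of $B$ combined with Remark \ref{ch5:rm:oi}, we get $b_1 = b_2$.

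I do not anticipate any real obstacle here; the argument is a direct unwinding of the definitions, with the only mild subtlety being the choice to exploit the \emph{containment} version of orderly independence (Remark \ref{ch5:rm:oi}: $x \in L(y)$ with $x,y$ in an orderly independent set forces $x = y$) rather than trying to manipulate the scalar-plus-primitive expressions inside the testing sets directly. Using that reformulation keeps the proof short and avoids any case analysis on the scalar $\alpha$ or primitive $p$.
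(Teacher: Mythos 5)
Your proof is correct and follows essentially the same route as the paper's: both bounce between the two bases via the generation property, use Proposition \ref{ch5:p:L(x)} to turn membership into containment of testing sets, and invoke orderly independence (Remark \ref{ch5:rm:oi}) to collapse back to the starting element. The only difference is cosmetic: where the paper finishes by explicitly combining the inequalities $\alpha b_1+p\leq a$ and $\alpha_2 a+p_2\leq b_2$ to show $b_2\in L(b_1)$, you obtain the same conclusion directly from the set equality $L(b_1)=L(a)=L(b_2)$ together with $b_1\in L(b_1)$, which is slightly cleaner.
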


\begin{proof} As $B$ is a basis of $X\smallsetminus X_0$, so for the member 
	$a\in A$, there must exist some $b\in B$
	such that $a\in L(b)$. Let us suppose, $\exists$ $b_1,b_2\in B$ such that 
	$a\in L(b_1)\cap L(b_2)$ $\Rightarrow$ $L(a)\subseteq L(b_1)\cap L(b_2)$ 
	[by proposition \ref{ch5:p:L(x)}] ------ $ (\ast) $. Again $a\in L(b_1)$ $\Rightarrow$
	$\exists$ $\alpha\in K^*$ and $p\in X_0$ such that $\alpha b_1+p\leq a$ ------ $ (\ast\ast) $. Now since $A$ is a basis, so for $b_1,b_2\in B$,
	$\exists$ $a_1,a_2\in A$ such that $b_1\in L(a_1)$ and $b_2\in L(a_2)$ 
	$\Rightarrow$ $L(b_1)\subseteq L(a_1)$
	and $L(b_2)\subseteq L(a_2)$ [by proposition \ref{ch5:p:L(x)}]. By $ (\ast) $, 
	$L(a)\subseteq L(a_1)$
	and $L(a)\subseteq L(a_2)$ $\Rightarrow$ $a\in L(a_1)$ and $a\in L(a_2)$. As
	$a,a_1,a_2$ are members of the basis $A$, so we can say that $a_2=a=a_1$
	[by above remark \ref{ch5:rm:oi}]. Therefore $b_1,b_2\in L(a)$ $\Rightarrow$ 
	$\exists$ $\alpha_1, \alpha_2 \in K^*$
	and $p_1,p_2\in X_0$ such that $\alpha_1 a+p_1\leq b_1$ and 
	$\alpha_2 a+p_2\leq b_2$ ------ $ (\ast\ast\ast) $. From $ (\ast\ast) $ and $ (\ast\ast\ast) $, we get 
	$b_2\geq \alpha_2 a+p_2\geq\alpha_2(\alpha b_1+p)+p_2=\alpha_2\alpha b_1+
	(\alpha_2 p+p_2)$. $\therefore$ $b_2\in L(b_1)$ [As $\alpha_2\alpha\in K^*$
	and $\alpha_2 p+p_2 \in X_0$]. Since $b_1, b_2$ are members of the basis $B$
	so $b_2=b_1$ [by above remark \ref{ch5:rm:oi}]. Thus there exists one and only one member (say) $b_a\in B$ 
	such that $a\in L(b_a)$ and also $b_a\in L(a)$ $\Rightarrow$ $L(a)\subseteq L(b_a)$
	and $L(b_a)\subseteq L(a)$ $\Rightarrow$ $L(a)=L(b_a)$.\end{proof}

\begin{Th}If $A$ and $B$ are two bases of $X\smallsetminus X_0$ then 
	$\cd A=\cd B$.\label{ch5:t:card}\end{Th}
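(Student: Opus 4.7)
The plan is to build an explicit bijection between $A$ and $B$ using the preceding lemma. By Lemma \ref{ch5:l:card}, for every $a\in A$ there is a uniquely determined element of $B$, call it $b_a$, satisfying $L(a)=L(b_a)$. This canonical assignment defines a map
\[
\phi:A\longrightarrow B,\qquad \phi(a):=b_a,
\]
and I would establish $\cd A=\cd B$ by showing that $\phi$ is a bijection.

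For injectivity, suppose $\phi(a_1)=\phi(a_2)=b$ for some $a_1,a_2\in A$. Then $L(a_1)=L(b)=L(a_2)$. Since $a_1\in L(a_1)=L(a_2)$ and $a_2\in L(a_2)=L(a_1)$, the orderly independence of $A$ together with Remark \ref{ch5:rm:oi} forces $a_1=a_2$. For surjectivity, I would apply Lemma \ref{ch5:l:card} with the roles of $A$ and $B$ reversed: given any $b\in B$, there is a unique $a^*\in A$ with $L(b)=L(a^*)$. Then $L(a^*)=L(b)$ shows that $b$ is an element of $B$ satisfying the defining property of $b_{a^*}$, and by the uniqueness clause of Lemma \ref{ch5:l:card} (applied to $a^*\in A$) we conclude $\phi(a^*)=b_{a^*}=b$. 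Hence $\phi$ is onto.

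The only mildly delicate point is the symmetric application of the lemma: one must be careful that the uniqueness statement in Lemma \ref{ch5:l:card} is used in the correct direction in each of the two verifications. The injectivity step uses orderly independence in $A$, while surjectivity uses orderly independence in $B$ (hidden inside the symmetric form of the lemma). Once the map $\phi$ is shown to be a well-defined bijection, the desired cardinality equality $\cd A=\cd B$ is immediate.
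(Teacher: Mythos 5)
Your proof is correct and takes essentially the same approach as the paper, which also invokes Lemma \ref{ch5:l:card} to set up a one-to-one correspondence $a\mapsto b_a$ between $A$ and $B$. You simply spell out the injectivity and surjectivity checks that the paper leaves implicit, and both verifications are carried out correctly.
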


\begin{proof} From the proof of the above lemma \ref{ch5:l:card} we can say 
	that for each $a\in A$, $\exists$ unique $b\in B$ such that
	$L(b)=L(a)$. This property creates a one to one correspondence between $A$ 
	and $B$. Hence the theorem.\end{proof}

This theorem motivates us to introduce the concept of dimension of an evs.

\begin{Def} For an evs $X$ we define \emph{dimension} of $X\smallsetminus X_0$
	as\\ \centerline{$\dim(X\smallsetminus X_0):=\cd B$, where $B$ is a basis of $X\smallsetminus X_0$.}
	Then we shall represent  \emph{dimension} of the evs $X$ as 
	$\dim X:=[\dim(X\smallsetminus X_0):\dim X_0]$. If $X_0=\{\theta\}$, dimension
	of $X_0$ will be taken as 0, since then $X_0$ has no basis [as vector space].
\end{Def}

\begin{Not}Theorem \ref{ch5:t:card} makes the above definition well-defined.
	From result \ref{ch5:r:basisiso} we can say that if $X$ and $Y$ are order-isomorphic
	evs then $\dim X=\dim Y$. Here by ``$\dim X=\dim Y$'' we mean $\dim(X\smallsetminus X_0)
	=\dim(Y\smallsetminus Y_0)$ as well as $\dim X_0=\dim Y_0$. However, converse
	of this is not true in general i.e there are evs $X,Y$ such that $\dim X=\dim Y$
	but $X,Y$ are not order-isomorphic. This will be clear in the next section, 
	when we shall compute the dimension of some particular evs.
\label{equality dimension}\end{Not}

\begin{Res} Let $X$ be an evs and $B$ be a basis of $X\smallsetminus X_0$.
	Then $\downarrow x\smallsetminus X_0\subseteq L(x)$, for each $x\in B$.
	\label{ch5:r:xinbasis}\end{Res}

\begin{proof} Let $x\in B$ and $y\in \downarrow x\smallsetminus X_0$. Since $B$ is a basis
	of $X\smallsetminus X_0$, $\exists$ $x_1\in B$ such that $y\in L(x_1)$ $\Rightarrow$
	$\exists$ $\alpha_1 \in  K^*$ and $p_1\in X_0$ such that 
	$\alpha_1x_1+p_1\leq y$ $\Rightarrow$ $\alpha_1x_1+p_1\leq x$ [$\because y\leq x$]
	$\Rightarrow$ $x\in L(x_1)$. Since $B$ is orderly independent and both $x,x_1\in B$,
	we can say by remark \ref{ch5:rm:oi} that $x=x_1$. Therefore $y\in L(x)$. Thus
	$\downarrow x\smallsetminus X_0\subseteq L(x)$, for each $x\in B$.
\end{proof}

This result reveals an important property of each member of a basis of 
$X\smallsetminus X_0$ which helps us to set up a precise domain of basic elements
of $X\smallsetminus X_0$. The collection of all $x$ in $X\smallsetminus X_0$ 
satisfying the property stated in the above result \ref{ch5:r:xinbasis} makes
our task of finding a basis of $X\smallsetminus X_0$
easier. To make this assertion precise let us consider the following :

For an evs $X$, let\\ 
\centerline{$Q(X):=\big\{x\in X\smallsetminus X_0: (\downarrow x\smallsetminus X_0)\subseteq L(x)\big\}$}
From result \ref{ch5:r:xinbasis} we can say that $B\subseteq Q(X)$, for any 
basis $B$ of $X\smallsetminus X_0$. It is thus enough to find any basis of 
$X\smallsetminus X_0$ within $Q(X)$. We call this set $Q(X)$ as \emph{feasible
	set} of $X$. At this point it is important to note that
$Q(X)$ may be empty; in fact, if for an evs $X$, $Q(X)=\emptyset$ then
such evs $X$ cannot have any basis (as we have claimed earlier).
We shall encounter such evs later. If for an evs $X$, $Q(X)\neq\emptyset$
then also $X$ may not have a basis. In fact, we shall prove shortly a theorem which will
characterise, in terms of $Q(X)$, when $X$ will have a basis. We now prove a 
lemma which will be useful in the sequel.

\begin{Lem}For an evs $X$, if $x\in Q(X)$ then for each $y\in\downarrow x\smallsetminus X_0$,
	$L(x)=L(y)$.\label{ch5:l:L(x)=L(y)}\end{Lem}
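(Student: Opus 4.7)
The plan is to deduce both inclusions $L(y)\subseteq L(x)$ and $L(x)\subseteq L(y)$ directly from Proposition \ref{ch5:p:L(x)} together with the defining property of the feasible set $Q(X)$. Since the hypothesis $x\in Q(X)$ puts $y$ inside $L(x)$ automatically, the work reduces to quoting two items of the proposition.

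First I would fix $x\in Q(X)$ and $y\in\downarrow x\smallsetminus X_0$. By the very definition of $Q(X)$, we have $\downarrow x\smallsetminus X_0\subseteq L(x)$, so in particular $y\in L(x)$. Applying Proposition \ref{ch5:p:L(x)}(v) with $a:=y$ and $b:=x$ immediately yields $L(y)\subseteq L(x)$.

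Next I would observe that $y\in\downarrow x\smallsetminus X_0$ gives both $y\leq x$ and $y\in X\smallsetminus X_0$; together with $x\in X\smallsetminus X_0$ (since $Q(X)\subseteq X\smallsetminus X_0$) this lets me invoke Proposition \ref{ch5:p:L(x)}(ii), which says $y\leq x$ implies $L(y)\supseteq L(x)$, i.e.\ $L(x)\subseteq L(y)$. Combining the two inclusions finishes the lemma.

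There is no genuine obstacle here; the statement is essentially a packaging of parts (ii) and (v) of Proposition \ref{ch5:p:L(x)} in the presence of the feasibility condition. The only point requiring a moment of care is verifying that the ambient hypotheses for those two parts are satisfied (namely that both $x$ and $y$ lie in $X\smallsetminus X_0$), which is immediate from $x\in Q(X)$ and from $y$ being chosen in $\downarrow x\smallsetminus X_0$.
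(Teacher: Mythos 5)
Your argument is correct and is essentially identical to the paper's own proof: the inclusion $L(x)\subseteq L(y)$ comes from $y\leq x$ via Proposition \ref{ch5:p:L(x)}(ii), and the reverse inclusion from $y\in L(x)$ (guaranteed by $x\in Q(X)$) via Proposition \ref{ch5:p:L(x)}(v). Nothing further is needed.
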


\begin{proof}$y\in\downarrow x\smallsetminus X_0$ $\Rightarrow$ $y\leq x$. So
	by proposition \ref{ch5:p:L(x)} we have $L(x)\subseteq L(y)$.
	Again by construction of $Q(X)$, $x\in Q(X)$ $\Rightarrow$ 
	$\downarrow x\smallsetminus X_0\subseteq L(x)$ $\Rightarrow$ $y\in L(x)$ 
	$\Rightarrow$ $L(y)\subseteq L(x)$ [by proposition \ref{ch5:p:L(x)}]. Thus 
	$L(x)=L(y)$.\end{proof}

The following theorem may be compared with the so-called `\emph{Replacement 
	theorem}' in the context of basis of a vector space.

\begin{Th} For an evs $X$, let $B$ be a basis of $X\smallsetminus X_0$ and 
	$x\in B$. Then for any $y\in\downarrow x\smallsetminus X_0$, 
	$\big(B\smallsetminus\{x\}\big)\cup \{y\}$ is also a basis of $X\smallsetminus X_0$.
\end{Th}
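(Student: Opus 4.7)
The plan is to verify the two defining properties of a basis for the set $B' := \bigl(B\smallsetminus\{x\}\bigr)\cup\{y\}$, namely that it generates $X\smallsetminus X_0$ and is orderly independent. The crucial bridge between $x$ and $y$ will be the equality $L(x)=L(y)$, which is available since $x\in B\subseteq Q(X)$ by Result \ref{ch5:r:xinbasis} and $y\in\downarrow x\smallsetminus X_0$, so Lemma \ref{ch5:l:L(x)=L(y)} applies. I also note that $y\in X\smallsetminus X_0$ by hypothesis, so $B'\subseteq X\smallsetminus X_0$ makes sense.

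For generation, I would take an arbitrary $w\in X\smallsetminus X_0$. Since $B$ is a basis, there is some $b\in B$ with $w\in L(b)$. If $b\neq x$, then $b\in B'$ and we are done. If $b=x$, then $w\in L(x)=L(y)$ by the equality above, and since $y\in B'$ we again conclude $w\in\bigcup_{z\in B'}L(z)$. Combined with $L(z)\cap X_0=\emptyset$ (Proposition \ref{ch5:p:L(x)}(iv)), this gives $X\smallsetminus X_0=\bigcup_{z\in B'}L(z)$.

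For orderly independence, I would pick two distinct $u,v\in B'$ and rule out $u\in L(v)$. If both $u,v\in B\smallsetminus\{x\}$ this is immediate from the orderly independence of $B$. The remaining case is when one of them, say $v$, equals $y$ and $u\in B\smallsetminus\{x\}$. Suppose $u\in L(y)$; by Proposition \ref{ch5:p:L(x)}(v) we get $L(u)\subseteq L(y)=L(x)$, so $u\in L(x)$, contradicting the orderly independence of $B$ (via Remark \ref{ch5:rm:oi}). Symmetrically, if $y\in L(u)$ then $L(x)=L(y)\subseteq L(u)$, hence $x\in L(u)$, again contradicting orderly independence of $B$. The symmetric case $u=y$, $v\in B\smallsetminus\{x\}$ is identical.

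The main (minor) obstacle is simply recognizing that the preceding lemma is exactly tailored to collapse the distinction between $x$ and $y$ inside testing sets; once $L(x)=L(y)$ is in hand, both properties transfer mechanically from $B$ to $B'$. No case analysis beyond the one above is needed, and no new structural property of the evs has to be invoked.
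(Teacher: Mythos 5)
Your proof is correct and follows essentially the same route as the paper: both hinge on $B\subseteq Q(X)$ together with Lemma \ref{ch5:l:L(x)=L(y)} to obtain $L(x)=L(y)$, after which generation is immediate and orderly independence is checked by the same two-case contradiction argument. No gaps.
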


\begin{proof} Let $A=\big(B\smallsetminus\{x\}\big)\cup \{y\}$. As 
	$y\in\downarrow x\smallsetminus X_0$ and 
	$x\in B\subseteq Q(X)$ so by lemma \ref{ch5:l:L(x)=L(y)}, $L(x)=L(y)$. Therefore 
	$X\smallsetminus X_0=\displaystyle \bigcup_{z\in B}L(z)=\displaystyle 
	\bigcup_{z\in A}L(z)$ $\Rightarrow$ $A$ generates $X\smallsetminus X_0$. To
	show that $A$ is orderly independent it is sufficient to show that for any 
	$z\in B\smallsetminus\{x\}$, $z, y$ are orderly independent.
	If not, then for some $z_1\in B\smallsetminus\{x\}$ either $y\in L(z_1)$ or 
	$z_1\in L(y)$. Now if $y\in L(z_1)$ then by proposition \ref{ch5:p:L(x)},
	$L(y)\subseteq L(z_1)$ $\Rightarrow$ $x\in L(x)=L(y)\subseteq L(z_1)$ which 
	contradicts that $x,z_1$ are two members of the basis $B$.
	Again if $z_1\in L(y)$ then $z_1\in L(y)=L(x)$ which again contradicts that 
	$x,z_1$ are orderly independent. Thus it follows that $A$ is orderly independent.
\end{proof}

The above theorem makes it convenient to construct new basis from old one. The 
following theorem is the key to ensure the existence of a basis of an evs.

\begin{Th} An evs $X$ has a basis iff $Q(X)$ is a generator of $X\smallsetminus X_0$.
	\label{ch5:t:basis}\end{Th}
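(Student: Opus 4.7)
The forward direction is nearly immediate: if $B$ is a basis of $X\smallsetminus X_0$, then Result \ref{ch5:r:xinbasis} gives $B\subseteq Q(X)$, and since any superset of a generator of $X\smallsetminus X_0$ is itself a generator, $Q(X)$ generates $X\smallsetminus X_0$. So the real content is the converse, and that is where I will focus.

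For the converse, assume $Q(X)$ is a generator of $X\smallsetminus X_0$. My plan is to carve out a basis from $Q(X)$ by collapsing the redundancy encoded in the testing-set map. Define a relation $\sim$ on $Q(X)$ by $x\sim y$ iff $L(x)=L(y)$; this is plainly an equivalence, and I pick (via the axiom of choice) one representative from each class to form a set $B\subseteq Q(X)$. Because $L(x)=L(y)$ whenever $x\sim y$, the generating property transfers at once: for any $z\in X\smallsetminus X_0$, the hypothesis gives some $x\in Q(X)$ with $z\in L(x)$, and then $z\in L(b)$ for the chosen representative $b\in B$ of $x$'s class.

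The heart of the argument is orderly independence of $B$, and this is where Proposition \ref{ch5:p:L(x)} together with the defining property of $Q(X)$ comes in. Suppose, toward a contradiction, that distinct $x,y\in B$ satisfy $y\in L(x)$; by Proposition \ref{ch5:p:L(x)}(v), $L(y)\subseteq L(x)$. From $y\in L(x)$, pick $\alpha\in K^*$ and $p\in X_0$ with $\alpha x+p\leq y$. A short check (using that $X_0$ is a vector space, so $\alpha x+p\in X_0$ would force $\alpha x\in X_0$ and hence $x\in X_0$) shows $\alpha x+p\notin X_0$; thus $\alpha x+p\in\downarrow y\smallsetminus X_0$, and by $y\in Q(X)$ this lies in $L(y)$. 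Applying Proposition \ref{ch5:p:L(x)}(v) again yields $L(\alpha x+p)\subseteq L(y)$, and Proposition \ref{ch5:p:L(x)}(iii) identifies $L(\alpha x+p)=L(x)$. Hence $L(x)\subseteq L(y)$, combining with the earlier inclusion to give $L(x)=L(y)$, contradicting that $x$ and $y$ are distinct representatives.

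The main obstacle, then, is precisely the orderly independence step: one must extract both inclusions $L(x)\subseteq L(y)$ and $L(y)\subseteq L(x)$ from the single assumption $y\in L(x)$, and the only tool that pushes the reverse inclusion through is the feasibility condition $\downarrow y\smallsetminus X_0\subseteq L(y)$. Everything else (the equivalence-relation bookkeeping, the transfer of the generating property, the small verification that $\alpha x+p\notin X_0$) is routine once the role of $Q(X)$ in that inclusion chain is identified.
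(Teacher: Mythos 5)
Your proof is correct and follows essentially the same route as the paper: the same quotient of $Q(X)$ by the equivalence $L(x)=L(y)$, the same transfer of the generating property, and the same use of the feasibility condition to force $L(x)=L(y)$ from a single dependence $y\in L(x)$. The only cosmetic difference is that you unpack Lemma \ref{ch5:l:L(x)=L(y)} via Proposition \ref{ch5:p:L(x)}(iii) and (v) instead of citing it directly.
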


\begin{proof} Let us suppose $X$ has a basis $[B:B_0]$, where $B$ and $B_0$ 
	are bases of $X\smallsetminus X_0$ and $X_0$ respectively. Then by the result 
	\ref{ch5:r:xinbasis}, $B\subseteq Q(X)$. As $B$ is a generator of 
	$X\smallsetminus X_0$ so $Q(X)$ is also a generator of $X\smallsetminus X_0$.
	
	Conversely, suppose $Q(X)$ is a generator of $X\smallsetminus X_0$ 
	$\Rightarrow$ $Q(X)\neq \emptyset$. We now give a relation $\sim$ in $Q(X)$ as 
	follows : For $x,y \in Q(X)$, we say $x\sim y$ $\Leftrightarrow$ $L(x)=L(y)$.
	Then obviously this becomes an equivalence relation on $Q(X)$. Let us 
	consider a collection taking exactly one representative from each equivalence
	class and denote this collection as $B$. Then $B\subseteq Q(X)\subseteq X\smallsetminus X_0$.
	Also $x,y\in B$ with $x\neq y$ $\Leftrightarrow$ $x,y\in Q(X)$ and $L(x)\neq L(y)$. We claim
	that $B$ is a basis of $X\smallsetminus X_0$. Let $z\in X\smallsetminus X_0$
	$\Rightarrow$ $\exists$ $x_z\in Q(X)$ [$\because Q(X)$ is a generator] such that 
	$z\in L(x_z)$ $\Rightarrow$ $\exists$ an element $x'_z\in B$ such that 
	$L(x_z)=L(x'_z)$ and hence $z\in L(x'_z)$ $\Rightarrow$ $B$ generates $X\smallsetminus X_0$. 
	Now we have to show that $B$ is orderly independent. Suppose not, then $\exists$ two distinct elements 
	$x_1,x_2\in B$ such that they are orderly dependent. So without loss 
	of generality we can think that $x_1\in L(x_2)$. Now $x_1\in L(x_2)$ 
	$\Rightarrow$ $\exists$ $\alpha \in \K^*$ and $p\in X_0$ such that 
	$\alpha x_2+p\leq x_1$. Since $x_1\in Q(X)$ [as $B\subseteq Q(X)$]  
	and $\alpha x_2+p\in\downarrow x_1\smallsetminus X_0$, using lemma \ref{ch5:l:L(x)=L(y)}
	we can say that $L(\alpha x_2+p)=L(x_1)$ and then by proposition \ref{ch5:p:L(x)}
	we have $L(x_2)=L(\alpha x_2+p)=L(x_1)$ ------ which contradicts that 
	$x_1,x_2$ are two distinct elements of $B$. Thus $B$ becomes a basis of $X\smallsetminus X_0$.
	Let us take any basis $B_0$ of $X_0$. Then $[B:B_0]$ becomes a basis of $X$.
\end{proof}

From the proof of the above theorem it is clear that the hypothesis of $Q(X)$
being a generator of $X\smallsetminus X_0$ is only used to justify that $B$,
constructed using the equivalence relation $\sim$ within $Q(X)$, is a generator
of $X\smallsetminus X_0$; to ensure the orderly independence of $B$, the structure
of $Q(X)$ is enough. Thus we can conclude that for any evs $X$, $Q(X)$ (if nonempty)
always contains an orderly independent set like $B$ (as constructed in the proof
of the above theorem \ref{ch5:t:basis}). This orderly independent set is also
a maximal orderly independent set in $Q(X)$. In fact, if $D$ be another orderly 
independent set in $Q(X)$ such that $B\subset D$ then for any $x\in D$, $\exists\,z\in B$
such that $L(x)=L(z)$ $\Rightarrow$ $x=z$ [$\because x,z\in D$ and $D$ is orderly
independent] and hence $x\in B$. Thus $B=D$. Summarising all these facts we get
the following theorem.

\begin{Th} For an evs $X$, if $Q(X)\neq\emptyset$ then it contains a maximal 
	orderly independent set.\label{ch5:t:max}\end{Th}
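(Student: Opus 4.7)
The plan is to execute the construction sketched in the paragraph immediately preceding the theorem, making each step precise. The key object is an equivalence relation on $Q(X)$ that detects when two elements generate the same testing set, and the desired maximal orderly independent set will consist of one representative from each equivalence class.

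First I would define, for $x,y \in Q(X)$, the relation $x \sim y$ iff $L(x) = L(y)$; reflexivity, symmetry, and transitivity are immediate from equality of sets. Invoking the Axiom of Choice, I select a set $B \subseteq Q(X)$ containing exactly one element from each equivalence class; by construction, for any two distinct $x_1, x_2 \in B$ we have $L(x_1) \neq L(x_2)$.

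Next I would verify that $B$ is orderly independent. Suppose for contradiction that some distinct $x_1, x_2 \in B$ are orderly dependent; without loss of generality $x_1 \in L(x_2)$, so there exist $\alpha \in K^*$ and $p \in X_0$ with $\alpha x_2 + p \leq x_1$. Since $x_1 \in Q(X)$ and $\alpha x_2 + p \in \downarrow x_1 \smallsetminus X_0$, Lemma \ref{ch5:l:L(x)=L(y)} gives $L(\alpha x_2 + p) = L(x_1)$, while Proposition \ref{ch5:p:L(x)}(iii) gives $L(\alpha x_2 + p) = L(x_2)$. Hence $L(x_1) = L(x_2)$, contradicting the defining property of $B$.

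Finally, maximality. Suppose $D$ is an orderly independent subset of $Q(X)$ with $B \subsetneq D$, and pick any $x \in D \smallsetminus B$. Since $x \in Q(X)$, there is a (unique) $z \in B$ with $x \sim z$, i.e.\ $L(x) = L(z)$; in particular $z \in L(z) = L(x)$ by Proposition \ref{ch5:p:L(x)}(i), so $x$ and $z$ are orderly dependent. But $x \neq z$ (as $z \in B$ while $x \notin B$), contradicting the orderly independence of $D$. Thus $B$ is maximal. The main conceptual obstacle is essentially trivialised by the work already done: the testing-set rigidity afforded by the condition $x \in Q(X)$ (via Lemma \ref{ch5:l:L(x)=L(y)}) converts the assumption ``orderly dependent'' directly into ``equal $L$-values,'' so the only real work is the invocation of choice and the careful bookkeeping of the two implications ``orderly independent $\Rightarrow$ distinct $L$-values'' and its converse within $B$.
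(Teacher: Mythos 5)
Your proof is correct and follows essentially the same route as the paper: the same equivalence relation $x\sim y \Leftrightarrow L(x)=L(y)$ on $Q(X)$, a choice of representatives, orderly independence via Lemma \ref{ch5:l:L(x)=L(y)} together with Proposition \ref{ch5:p:L(x)}, and the same maximality argument showing any orderly independent superset collapses back to $B$. No gaps; you have merely written out in full what the paper compresses into the paragraph preceding the theorem.
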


The next theorem is useful in finding a basis of $X\smallsetminus X_0$, for any
evs $X$.

\begin{Th}For an evs $X$, every maximal orderly independent set of $Q(X)$ is a
	basis of $X\smallsetminus X_0$, provided $Q(X)$ generates $X\smallsetminus X_0$. 
	\label{ch5:t:conmax}\end{Th}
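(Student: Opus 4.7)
Let $B$ be a maximal orderly independent subset of $Q(X)$, under the hypothesis that $Q(X)$ generates $X\smallsetminus X_0$. Since $B\subseteq Q(X)\subseteq X\smallsetminus X_0$ and $B$ is orderly independent by hypothesis, the only thing left to verify is that $B$ generates $X\smallsetminus X_0$; then $B$ is a basis by definition.

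The plan is to take an arbitrary $z\in X\smallsetminus X_0$ and locate some $b\in B$ with $z\in L(b)$. Since $Q(X)$ generates $X\smallsetminus X_0$, pick $x\in Q(X)$ with $z\in L(x)$. If $x\in B$, then we are done. Otherwise $x\in Q(X)\smallsetminus B$, and by maximality of $B$ in $Q(X)$ the set $B\cup\{x\}$ fails to be orderly independent; hence there exists $b\in B$ which is orderly dependent with $x$, i.e.\ either $x\in L(b)$ or $b\in L(x)$.

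The easy case is $x\in L(b)$: by Proposition \ref{ch5:p:L(x)}(v) we then have $L(x)\subseteq L(b)$, so $z\in L(x)\subseteq L(b)$, as required. The main obstacle is the reverse case $b\in L(x)$, for which the proof must exploit the crucial fact that $b$ lies in $Q(X)$. Here I would argue as follows: from $b\in L(x)$ we get $\alpha x+p\leq b$ for some $\alpha\in K^*$ and $p\in X_0$. The element $\alpha x+p$ lies in $\downarrow b$ and, since $x\notin X_0$ (recall $X_0$ is a vector space closed under scalar multiplication and addition with elements of $X_0$), it also lies in $X\smallsetminus X_0$. Because $b\in Q(X)$, Lemma \ref{ch5:l:L(x)=L(y)} yields $L(\alpha x+p)=L(b)$, while Proposition \ref{ch5:p:L(x)}(iii) gives $L(\alpha x+p)=L(x)$. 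Combining these, $L(b)=L(x)$, so $z\in L(x)=L(b)$.

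In every case $z\in\bigcup_{b\in B}L(b)$, proving that $B$ generates $X\smallsetminus X_0$. Together with orderly independence this shows $B$ is a basis of $X\smallsetminus X_0$, completing the proof.
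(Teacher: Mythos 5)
Your proof is correct and follows essentially the same route as the paper: reduce to showing $B$ generates, use maximality of $B$ in $Q(X)$ to find $b\in B$ orderly dependent with the generator $x$, handle $x\in L(b)$ via Proposition \ref{ch5:p:L(x)}(v), and handle $b\in L(x)$ via Lemma \ref{ch5:l:L(x)=L(y)} using $b\in Q(X)$. Your treatment of the second case is in fact slightly more careful than the paper's, since you explicitly verify $\alpha x+p\notin X_0$ before applying the lemma.
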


\begin{proof} Let $B$ be a maximal orderly independent set in $Q(X)$. Since 
	$Q(X)$ generates $X\smallsetminus X_0$, for any $x\in X\smallsetminus X_0$, 
	$\exists\,z\in Q(X)$ such that $x\in L(z)$. If $z\in B$ we are done. If $z\notin B$
	then $B$ being a maximal orderly independent set in $Q(X)$, $B\cup\{z\}$ is 
	orderly dependent. So $\exists\,b\in B$ such that either $z\in L(b)$ or $b\in L(z)$.
	If $z\in L(b)$ then by proposition \ref{ch5:p:L(x)}, $x\in L(z)\subseteq L(b)$.
	If $b\in L(z)$, $\exists\,\alpha\in\K^*$ and $p\in X_0$ such that $b\geq\alpha z+p$.
	Then $b\in B\subseteq Q(X)$ $\Rightarrow$ $L(z)=L(b)$ [by lemma \ref{ch5:l:L(x)=L(y)}]
	$\Rightarrow$ $x\in L(b)$. Thus $B$ generates $X\smallsetminus X_0$. Consequently
	$B$ is a basis of $X\smallsetminus X_0$.
\end{proof}

The above theorem \ref{ch5:t:conmax} shows the converse of the result \ref{ch5:r:max} to some extent; as we 
have explained, just after the result \ref{ch5:r:max}, through the example of $\com{\X}{\theta}$
that every maximal orderly independent subset of $X\smallsetminus X_0$ need 
not be a basis of $X\smallsetminus X_0$, the above theorem \ref{ch5:t:conmax} shows that every 
maximal orderly independent subset of $Q(X)$ [but \emph{not only} of $X\smallsetminus X_0$]
becomes a basis of $X\smallsetminus X_0$, provided of course $Q(X)$ generates
$X\smallsetminus X_0$ [note that the necessity of $Q(X)$ being a generator of
$X\smallsetminus X_0$ is the principal key for an evs $X$ to have a basis]. From
remark \ref{ch5:rm:oi} we may recall one more point that while finding a basis
of $X\smallsetminus X_0$, we have to gather only suitable incomparable 
elements from $Q(X)$. In this context it should also be noted that any two 
elements of $Q(X)$ need not be orderly independent. In fact, for any $x\in Q(X)$
if $y\in\downarrow x\smallsetminus X_0$ then also $y\in Q(X)$ [see result 
\ref{ch5:r:QX}(ii)]. Clearly this $x,y$ are orderly dependent, since $L(x)=L(y)$.

\begin{Res}If $X$ and $Y$ are order-isomorphic then $Q(X)$ and $Q(Y)$ are in 
	a one-to-one correspondence.\label{ch5:r:cardQX}\end{Res}

\begin{proof} Let $\phi:X\longrightarrow Y$ be an order-isomorphism. We now 
	show that $\phi(Q(X))=Q(Y)$.
	Let $x_0\in Q(X)$ $\Rightarrow$ $\downarrow x_0\smallsetminus X_0\subseteq 
	L(x_0)$. Also let $y\in \downarrow \phi(x_0)\smallsetminus Y_0$
	$\Rightarrow$ $y\leq \phi(x_0)$ and $y\notin Y_0$ $\Rightarrow$ 
	$\phi^{-1}(y)\leq x_0$ and $\phi^{-1}(y)\notin X_0$
	$\Rightarrow$ $\phi^{-1}(y)\in \downarrow x_0\smallsetminus X_0\subseteq 
	L(x_0)$ $\Rightarrow$ $\exists$ $\alpha \in \K^*$ and
	$p\in X_0$ such that $\alpha x_0+p\leq \phi^{-1}(y)$ $\Rightarrow$ 
	$\alpha\phi(x_0)+\phi(p)\leq y$ $\Rightarrow y\in L(\phi(x_0))$ [$\because$ 
	$\phi(p)\in Y_0$]. Therefore $\downarrow\phi(x_0)\smallsetminus Y_0 
	\subseteq L(\phi(x_0))$ $\Rightarrow$ $\phi(Q(X))\subseteq Q(Y)$. Similarly 
	we can say that $\phi^{-1} (Q(Y))\subseteq Q(X)$ [$\because$ $\phi^{-1}$ is 
	an order-isomorphism from $Y$ onto $X$] $\Rightarrow$ $Q(Y)\subseteq \phi(Q(X))$.\\
	$\therefore$ $\phi(Q(X))=Q(Y)$. Thus $Q(X)$ and $Q(Y)$ are in a one-to-one 
	correspondence.\end{proof}

\begin{Res}{\em(i)} If $x\in Q(X)$ then for any $\alpha\in \K^*$ and $p\in X_0$, 
	$\alpha x+p\in Q(X)$ i.e $Q(X)$ is closed under dilation and translation by
	primitive elements.\\
	{\em(ii)} If $x\in Q(X)$ then $\downarrow x\smallsetminus X_0\subseteq Q(X)$
	i.e $\downarrow Q(X)\smallsetminus X_0\subseteq Q(X)$.
	\label{ch5:r:QX}\end{Res}

\begin{proof}(i) $x\in Q(X)$ $\Rightarrow $ $\downarrow x\smallsetminus X_0\subseteq L(x)$.
	We now show that $\downarrow(\alpha x+p)\smallsetminus X_0
	\subseteq L(\alpha x+p)$. Let $y\in \downarrow (\alpha x+p)\smallsetminus X_0$
	$\Rightarrow$ $y\leq \alpha x+p$ and $y\notin X_0$ $\Rightarrow$ 
	$\alpha^{-1}(y-p)\leq x$ and $y\notin X_0$ $\Rightarrow$ 
	$\alpha ^{-1}(y-p)\in \downarrow x\smallsetminus X_0$ $\Rightarrow$ 
	$\alpha^{-1}(y-p)\in L(x)$ $\Rightarrow$ $\exists$ $\beta\in \K^*$ and 
	$q\in X_0$ such that $\beta x+q\leq \alpha^{-1}(y-p)$ $\Rightarrow$ 
	$\alpha(\beta x+q)+p\leq y$ $\Rightarrow$ $\alpha\beta x+\alpha q+p\leq y$ 
	$\Rightarrow$ $y\in L(x)$ [$\because\alpha q+p\in X_0$] $\Rightarrow$ 
	$y\in L(\alpha x+p)$ [$\because L(\alpha x+p)=L(x)$, by proposition \ref{ch5:p:L(x)}].
	Therefore $\alpha x+p\in Q(X)$.
	
	(ii) Let $y\in\downarrow x\smallsetminus X_0$. Then by lemma \ref{ch5:l:L(x)=L(y)},
	$L(x)=L(y)$. Now for each $z\in\downarrow y\smallsetminus X_0$ we have $z\leq y\leq x$
	with $z\notin X_0$ $\Rightarrow$ $L(z)=L(x)$ [by lemma \ref{ch5:l:L(x)=L(y)}]
	$\Rightarrow$ $z\in L(x)=L(y)$. Thus $\downarrow y\smallsetminus X_0\subseteq L(y)$.
	Consequently, $y\in Q(X)$ and hence $\downarrow x\smallsetminus X_0\subseteq Q(X)$.
\end{proof}

As we have explained in remark \ref{ch5:rm:suboi} regarding orderly independence
in a subevs of an evs, the theory of basis of a subevs does not behave nicely 
like the theory of basis of a subspace of a vector space. However we have the 
following theorems and examples which reveal some technical aspects of
dimension theory of evs.

\begin{Th}Every evs contains a subevs of dimension $[1:0]$.\end{Th}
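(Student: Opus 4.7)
My plan is to construct the desired subevs explicitly. Assuming $X \neq X_0$ (the only nonvacuous case, since any subevs $Y$ of $X_0$ satisfies $Y \smallsetminus Y_0 = \emptyset$), I first locate an element $x \in X \smallsetminus X_0$ satisfying $\theta \leq x$. Pick any $x_0 \in X \smallsetminus X_0$, take $p \in X_0$ with $p \leq x_0$ by $A_6$, and set $x := x_0 + (-1) \cdot p$. Axiom $A_2$ applied to $p \leq x_0$, together with $A_5$ (which gives $p + (-1) \cdot p = \theta$), yields $\theta \leq x$; moreover $x \in X_0$ would force $x_0 = x + p \in X_0$ since $X_0$ is a vector space, contradicting the choice of $x_0$.

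Next I would propose
\[
S := \Big\{\sum_{i=1}^{n} \alpha_i \cdot x : n \in \N,\ \alpha_i \in K\Big\}
\]
as the candidate subevs. Closure under scalar multiplication and addition is immediate from $A_3$(i),(ii) after collecting iterated scalar multiples, giving condition (i) of Note \ref{n:subevs}. The crux is identifying $S_0$: since $\theta \leq x$, axiom $A_2$ yields $\theta = \alpha \cdot \theta \leq \alpha \cdot x$ for every $\alpha \in K$, and iterated use of $A_2$ propagates this to $\theta \leq z$ for every $z \in S$. Hence $\theta \in S_0$ by antisymmetry (anything in $S$ below $\theta$ must equal $\theta$), while any $z \in S$ distinct from $\theta$ has $\theta$ strictly below it and so fails to be minimal. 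Consequently $S_0 = \{\theta\} \subseteq X_0 \cap S$, and conditions (ii), (iii) of Note \ref{n:subevs} are immediate.

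Finally I would check that $\{x\}$ is a basis of $S \smallsetminus S_0$. Given $z = \sum_i \alpha_i x$ with $z \neq \theta$, at least one $\alpha_{i_0} \in K^*$ (else $z = \theta$ by $A_4$); by $A_1$ one may rewrite $z = \alpha_{i_0} \cdot x + \sum_{j \neq i_0} \alpha_j \cdot x$, and the trailing sum lies above $\theta$ by the same monotonicity, so $z \geq \alpha_{i_0} \cdot x$ with $\alpha_{i_0} \in K^*$, placing $z$ in $L_S(x)$. Vacuous orderly independence of a singleton then completes the argument, giving $\dim(S \smallsetminus S_0) = 1$ and hence $\dim S = [1:0]$. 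The only genuinely delicate step is the verification of $S_0 = \{\theta\}$: without the normalization $\theta \leq x$, the inequality $\alpha \cdot x \geq \theta$ would fail in general and both the computation of $S_0$ and the generation argument would collapse.
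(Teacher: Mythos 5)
Your proof is correct and follows essentially the same route as the paper: both take an element $x\geq\theta$ with $x\notin X_0$, form the set of all finite sums $\sum\alpha_i x$, verify it is a subevs with primitive space $\{\theta\}$, and show $\{x\}$ is a basis. The only difference is that you explicitly justify the existence of such an $x$ via $A_6$ and $A_5$ (translating an arbitrary $x_0\in X\smallsetminus X_0$ by $-p$), a point the paper leaves implicit.
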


\begin{proof} Let $X$ be an evs over $\K$ and 
	$B(x):=\left\{\displaystyle\sum_{i=1}^n\alpha_i x:\alpha_i\in\K,n\in\N\right\}$,
	where $x\in\uparrow\theta\smallsetminus\{\theta\}$. Then for any $\alpha ,\beta \in \K$ and any
	$\displaystyle \sum_{i=1}^n \alpha_i x$, $\displaystyle \sum_{j=1}^m \beta_j x \in B(x)$
	we have $\alpha \displaystyle \sum_{i=1}^n \alpha_i x+\beta \displaystyle \sum_{j=1}^m \beta_j x$
	$=\displaystyle \sum_{i=1}^n\alpha \alpha_i x+\displaystyle \sum_{j=1}^m \beta\beta_j x\in B(x)$.
	Also $[B(x)]_0=\{\theta\}=B(x)\cap X_0$ and for any $y\in B(x)$, $\theta\leq y$
	[$\because \theta \leq x$]. So $B(x)$ forms a subevs of $X$ for any 
	$x\in\uparrow\theta\smallsetminus\{\theta\}$.
	In this case $\{x\}$ forms a basis of $B(x)\smallsetminus[B(x)]_0$. In 
	fact, for any $\displaystyle \sum_{i=1}^n \alpha_i x\in B(x)$,
	$\alpha_jx+\theta\leq \displaystyle \sum_{i=1}^n \alpha_i x$ for some 
	$j\in \{1,2,\dots,n\}$ for which $\alpha_j\neq 0$.
	Again any singleton set consisting of a non-zero element is always orderly independent.
	So we can say that $\dim B(x)=[1:0]$.\end{proof}

The following example shows that corresponding to any cardinal $\alpha$, there
exists an evs of dimension $[\alpha:0]$.

\begin{Ex} For any cardinal number $\alpha$, let us consider the evs 
	$[0,\infty)^\alpha$, discussed in Example \ref{e:prod}. Let us take a set $I$ such that $\cd I=\alpha$. We now
	show that $B:=\{e_i:i\in I\}$ is a basis of $[0,\infty)^\alpha$, where 
	$e_i=(\delta^i_j)_{j\in I}$ and 
	$\delta^i_j=\begin{cases}1,\text{ when }i=j\\ 0,\text{ when }i\neq j\end{cases}$\\
	For any $x\in [0,\infty)^\alpha\smallsetminus\big[[0,\infty)^\alpha\big]_0$ 
	\Big[here $\big[[0,\infty)^\alpha\big]_0=\{\theta\}$ and $\theta=(z_j)_{j\in I}$,
	where $z_j=0$, $\forall\,j\in I$\Big] with representation $x=(x_j)_{j\in I}$, 
	$\exists$ $p\in I$ such that $x_p\neq0$ $\Rightarrow$ $x_pe_p\leq x$ 
	[$\because x_j\geq0$, $\forall\,j\in I$]
	$\Rightarrow$ $x_pe_p+\theta\leq x$ $\Rightarrow$ $x\in L(e_p)$ $\Rightarrow$
	$B$ generates $[0,\infty)^\alpha\smallsetminus\big[[0,\infty)^\alpha\big]_0$.
	Now clearly any two members of $B$ are orderly independent in 
	$[0,\infty)^\alpha\smallsetminus\big[[0,\infty)^\alpha\big]_0$.
	This shows that $B$ is a basis of $[0,\infty)^\alpha\smallsetminus\big[[0,\infty)^\alpha\big]_0$.
	Therefore $\dim [0,\infty)^\alpha=[\alpha:0]$, since $\cd B=\cd I=\alpha$ and 
	$\dim\big[[0,\infty)^\alpha\big]_0=\dim\{\theta\}=0$.
	\label{ch5:e:alpha}\end{Ex}

Thus for any two cardinal numbers $\alpha,\beta$ with $\alpha\neq\beta$,
$[0,\infty)^\alpha$ and $[0,\infty)^\beta$ cannot be order-isomorphic, since they are of different dimension.
We now show that for any two cardinal numbers $\alpha,\beta$, there exists an evs
of dimension $[\alpha:\beta]$. For this we need the following theorem first.

\begin{Th}For an evs $X$ and a vector space $V$, both being over the common field $\K$,
	the evs $Y:=X\times V$ has a basis iff the evs $X$ has a basis \emph{[The evs 
		$X\times V$ is discussed in example \ref{ch3:e:xxe}]}. 
	Also $\dim(X\times V)=[\dim (X\smallsetminus X_0):\dim X_0+\dim V]$.
	\label{ch5:t:xe}\end{Th}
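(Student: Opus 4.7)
The plan is to reduce everything to the explicit computation of the testing sets in $Y = X \times V$ and then exploit the fact that the second coordinate can always be adjusted freely using the primitive space of $Y$.

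First, I would identify the primitive space $Y_0$. From the definition of the order on $X \times V$ it is straightforward that $(x,e)$ is minimal in $Y$ iff $x$ is minimal in $X$, so $Y_0 = X_0 \times V$. In particular, as a vector space, $\dim Y_0 = \dim X_0 + \dim V$, which takes care of the zero-space part of the dimension formula.

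Next, the key computation: for $(x,e) \in Y \smallsetminus Y_0$ and any $(z,f) \in Y$, the relation $(z,f) \geq \alpha(x,e) + (p,v)$ with $\alpha \in \K^*$ and $(p,v) \in Y_0 = X_0 \times V$ amounts to $z \geq \alpha x + p$ together with $f = \alpha e + v$; the latter equation is solvable for any $f \in V$ simply by taking $v := f - \alpha e \in V$. Consequently
\[
L_Y\bigl((x,e)\bigr) \;=\; L_X(x) \times V.
\]
From this I immediately get that $(x_1,e_1)$ and $(x_2,e_2)$ are orderly dependent in $Y \smallsetminus Y_0$ iff $x_1, x_2$ are orderly dependent in $X \smallsetminus X_0$ (the $V$-coordinates are irrelevant).

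For the forward direction, assuming $X$ has a basis $B$ of $X \smallsetminus X_0$, I would fix any $e_0 \in V$ and set $B' := \{(b, e_0) : b \in B\}$. Orderly independence of $B'$ is inherited from $B$ by the equivalence above. For generation, given arbitrary $(x,e) \in Y \smallsetminus Y_0$, pick $b \in B$ with $x \in L_X(b)$, say $x \geq \alpha b + p$; then $(x,e) \geq \alpha(b,e_0) + (p, e - \alpha e_0)$ with $(p, e - \alpha e_0) \in Y_0$, so $(x,e) \in L_Y((b,e_0))$. Hence $B'$ is a basis of $Y \smallsetminus Y_0$, and $\cd B' = \cd B$.

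For the converse, suppose $B'$ is a basis of $Y \smallsetminus Y_0$. The subtle step is injectivity of the projection $\pi : B' \to X \smallsetminus X_0$, $(x,e) \mapsto x$: if $(x,e_1), (x,e_2) \in B'$ with $e_1 \neq e_2$ existed, then using $x \in L_X(x)$ (Proposition \ref{ch5:p:L(x)}(i)) these two points would be orderly dependent in $Y$, contradicting orderly independence of $B'$. So $\pi$ is injective, and I would then check that $\pi(B')$ is orderly independent (again from the equivalence) and generates $X \smallsetminus X_0$: given $x \in X \smallsetminus X_0$, pick any $e \in V$, locate $(b,e') \in B'$ with $(x,e) \in L_Y((b,e'))$, and read off $x \in L_X(b)$. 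Thus $\pi(B')$ is a basis of $X \smallsetminus X_0$ with $\cd \pi(B') = \cd B'$. Combining both directions gives existence equivalence and the cardinality equality $\dim(Y \smallsetminus Y_0) = \dim(X \smallsetminus X_0)$, which together with the computation of $\dim Y_0$ yields the claimed formula. The only mildly delicate point is the injectivity argument for $\pi$; everything else is a direct unfolding of definitions via the key identity $L_Y((x,e)) = L_X(x) \times V$.
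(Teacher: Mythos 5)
Your proof is correct and follows essentially the same route as the paper: lift a basis of $X\smallsetminus X_0$ to $Y\smallsetminus Y_0$ by fixing the second coordinate, and project a basis of $Y\smallsetminus Y_0$ back down to $X\smallsetminus X_0$. Your up-front identity $L_Y\bigl((x,e)\bigr)=L_X(x)\times V$ is a clean way of packaging the coordinate computations that the paper carries out inline, and your explicit injectivity check for the projection is a small point the paper leaves implicit.
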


\begin{proof}Let $X$ has a basis. We first show that $A:=\{(b,\theta_V): b\in B\}$
	is a basis of $Y\smallsetminus Y_0$,
	where $B$ is a basis of $X\smallsetminus X_0$ and $\theta_V$ is the identity 
	of $V$. As $B$ is orderly independent in $X\smallsetminus X_0$ we can say that any two members of $A$ are orderly
	independent $\Rightarrow$ $A$ is an orderly independent set in $Y\smallsetminus Y_0$.
	Let $(x,v)\in Y\smallsetminus Y_0$ $\Rightarrow$ $x\in X\smallsetminus X_0$
	[$\because Y_0=X_0\times V$]. Since $B$ generates $X\smallsetminus X_0$, for
	this $x$, $\exists$ $b\in B$ such that $x\in L(b)$
	$\Rightarrow$ $\alpha b+p\leq x$ for some $\alpha \in \K^*$ and $p\in X_0$ $\Rightarrow$
	$\alpha (b,\theta_V)+(p,v)=(\alpha b+p,v)\leq(x,v)$ and $(p,v)\in[X\times V]_0$
	$\Rightarrow$ $(x,v)\in L\big((b,\theta_V)\big)$ $\Rightarrow$
	$A$ is a generator of $Y\smallsetminus Y_0$. So $A$ becomes a basis of 
	$Y\smallsetminus Y_0$. Consequently $Y$ has a basis. Now 
	$\dim(Y\smallsetminus Y_0)=\cd A=\cd B=\dim (X\smallsetminus X_0)$ and
	$\dim[X\times V]_0=\dim (X_0\times V)=\dim X_0+\dim V$. Therefore 
	$\dim(X\times V)=[\dim (X\smallsetminus X_0):\dim X_0+\dim V]$.
	
	Conversely, suppose $Y:=X\times V$ has a basis. Let $B$ be a basis of 
	$Y\smallsetminus Y_0$. Now consider $B':=\{x: (x,v_x)\in B\text{ for some }v_x\in V\}$.
	Then $x\in B'$ $\Rightarrow$ $x\notin X_0$. Therefore $B'\subseteq X\smallsetminus X_0$.
	We now show that $B'$ forms a basis of $X\smallsetminus X_0$.
	For any $z\in X\smallsetminus X_0$, $(z,\theta_V)\in Y\smallsetminus Y_0$. As
	$B$ is a basis of $Y\smallsetminus Y_0$, $\exists$ $(x,v_x)\in B$ such that
	$\alpha (x,v_x)+(p,v)\leq (z,\theta_V)$ for some $\alpha \in \K^*$ and 
	$(p,v)\in[X\times V]_0=X_0\times V$ $\Rightarrow$ $(\alpha x+p,\alpha v_x+v)\leq (z,\theta_V)$
	$\Rightarrow$ $\alpha x+p\leq z$ $\Rightarrow$ $z\in L(x)$. So $B'$ generates
	$X\smallsetminus X_0$. If two members of $B'$ say $x',z'$ are
	orderly dependent then without loss of generality we can take $x'\in L(z')$ 
	$\Rightarrow$ $\exists$ $\alpha\in \K^*$ and $p\in X_0$ such that
	$\alpha z'+p\leq x'$ $\Rightarrow$ $\alpha (z',v_{z'})+(p,v_{x'}-\alpha v_{z'})\leq(x',v_{x'})$
	$\Rightarrow$ $(x',v_{x'})$ and $(z',v_{z'})$ are orderly dependent in 
	$Y\smallsetminus Y_0$. Therefore we can say that $B'$ is orderly independent 
	in $X\smallsetminus X_0$ as $B$ is orderly independent in $Y\smallsetminus Y_0$.
	So $B'$ becomes a basis of $X\smallsetminus X_0$. Consequently, $X$ has a basis.
\end{proof}

\begin{Ex} For any two cardinal numbers $\alpha, \beta$ there exists an evs 
	$X$ such that $\dim X=[\alpha:\beta]$. For example, if we consider the evs 
	$X:=Y\times E$, where $Y$ is an evs whose dimension is $[\alpha:0]$ (existence 
	of such evs has been established in example \ref{ch5:e:alpha}) and
	$E$ is a vector space with dimension $\beta$, then by above theorem 
	\ref{ch5:t:xe} $\dim X=[\alpha:\beta]$.
\end{Ex}

\begin{Th} Let $X$ be an evs whose dimension is $[\alpha:\beta]$. Also let $\gamma$ and $\delta$ be two cardinal
	numbers such that $\gamma \leq \alpha$ and $\delta\leq \beta$. Then $\exists$ a subevs $Y$ of $X$ such that
	$\dim Y=[\gamma:\delta]$.\end{Th}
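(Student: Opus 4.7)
The plan is to construct $Y$ as the set of ``formal linear combinations'' built from $\gamma$ chosen basis vectors of $X\setminus X_0$ and a $\delta$-dimensional vector subspace of $X_0$. The key preparatory step is to normalise the basis so that $\theta$ lies below every chosen generator; afterwards, any $\delta$-dimensional subspace of $X_0$ will do for the primitive part. The main obstacle is to pin the primitive space $Y_0$ down exactly, so that $\dim Y_0=\delta$ and not something strictly larger.

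I begin from a basis $[B:B_0]$ of $X$, and for each $b\in B$ use axiom $A_6$ to pick a primitive $p_b\le b$. Applying $A_2$ after adding $(-1)p_b$ to both sides of $p_b\le b$ gives $\theta=p_b+(-1)p_b\le b+(-1)p_b$, while $b=1\cdot\bigl(b+(-1)p_b\bigr)+p_b$ together with Proposition \ref{ch5:p:L(x)}(iii) yields $L\bigl(b+(-1)p_b\bigr)=L(b)$. Hence $\{b+(-1)p_b:b\in B\}$ is again a basis of $X\setminus X_0$, and I may assume without loss of generality that $\theta\le b$ for every $b\in B$. Next, pick any $B'\subseteq B$ with $\cd{B'}=\gamma$ and any $\delta$-dimensional subspace $V_0$ of $X_0$ (possible since $\dim X_0=\beta\ge\delta$), and set
\[
Y:=\Big\{\textstyle\sum_{i=1}^{n}\alpha_i b_i+v\,:\,n\ge 0,\ \alpha_i\in K,\ b_i\in B',\ v\in V_0\Big\},
\]
where $n=0$ contributes exactly the elements of $V_0$. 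Axiom $A_3$(i)--(ii) makes $Y$ closed under $\lambda y_1+y_2$; and since $\theta\le b_i$ yields $\theta\le\alpha b_i$ for every scalar $\alpha$ (via $A_2$ together with $\alpha\theta=\theta$), every element of $Y$ has the primitive $v\in V_0$ as a lower bound, which together with $V_0\subseteq X_0$ verifies the three clauses of Note \ref{n:subevs}. Hence $Y$ is a subevs of $X$.

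For the dimension, $B'$ is orderly independent in $Y\setminus Y_0$ by Remark \ref{ch5:rm:suboi}. For generation: if $y=\sum\alpha_i b_i+v\in Y\setminus Y_0$, pick any $j$ with $\alpha_j\ne 0$ and observe $\alpha_j b_j+v\le y$ with $v\in Y_0$, so $y\in L_Y(b_j)$. The heart of the argument is then to show $Y_0=V_0$. The inclusion $V_0\subseteq Y_0$ is immediate; conversely, if $z=\sum\alpha_i b_i+v\in Y_0$ has some $\alpha_j\ne 0$, then $v\le z$ together with the minimality of $z$ forces $v=z$, which after adding $(-1)v$ and using $A_5$ yields $\sum\alpha_i b_i=\theta$. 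Combining this with $\theta\le\sum_{i\ne j}\alpha_i b_i$ produces $\alpha_j b_j\le\theta$, and primitivity of $\theta$ together with $A_4$ forces $\alpha_j=0$---a contradiction. Hence $\dim Y_0=\delta$ and $\dim(Y\setminus Y_0)=\gamma$, giving $\dim Y=[\gamma:\delta]$.
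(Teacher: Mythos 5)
Your proposal is correct and follows essentially the same route as the paper: both construct the subevs as the set of all combinations $\sum_{i}\alpha_i b_i+v$ of $\gamma$ chosen orderly independent generators together with a $\delta$-dimensional subspace of $X_0$, and both identify the primitive space of the resulting subevs by the same minimality argument. The only real difference is a mild streamlining on your part --- by normalising every generator to satisfy $\theta\le b$ at the outset you avoid the paper's case split on $\delta<\gamma$ versus $\gamma\le\delta$ and may take an arbitrary $\delta$-dimensional subspace of $X_0$ for the primitive part.
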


\begin{proof} Let $B$ be a basis of $X\smallsetminus X_0$. Then $\cd B=\alpha$.
	Since $\gamma\leq\alpha$, there exists $C\subseteq B$ such that $\cd C=\gamma$.
	For each $c\in C$ we choose one element $p_c\in P_c$ and fix it. \\
	\textbf{\underline{Case 1} :} If $\delta<\gamma$ then $\exists$ 
	$E\subsetneqq C$ such that $\cd E=\delta$. Consider the set\\ 
	\centerline{$D:=E\cup\{c-p_c:c\in C\smallsetminus E\}$}
	Since $C$ is orderly independent it follows that $\cd D=\cd C=\gamma$. As  
	$L(c-p_c)=L(c)$ it follows that $D$ is an orderly independent set in $X\smallsetminus X_0$.
	Also consider for any $d\in D$, $q_d=p_d$ if $d\in E$ otherwise $q_d=\theta$.
	Then there exists a subspace $W$ of the vector space $X_0$  such
	that $q_d\in W$, $\forall\,d\in D$ and $\dim W=\delta$.\\
	\textbf{\underline{Case 2} :} If $\gamma \leq \delta$ then consider $D=C$ 
	and $q_d=p_d$, $\forall\,d\in D$. Then also there exists a subspace $W$ of 
	$X_0$ such that $q_d\in W$, $\forall\,d\in D$ and $\dim W=\delta$. \\
	Thus for both cases we get\\ (i) an orderly independent set $D$ in 
	$X\smallsetminus X_0$ whose cardinality is $\gamma$.\\
	(ii) a subspace $W$ of $X_0$ such that $q_d\in W$ where $q_d<d$,\footnote{Here
		the notation `$q_d<d$' is used to mean that $q_d\leq d$ but $q_d\neq d$.} 
	$\forall\,d\in D$ and $\dim W=\delta$.\\
	Now we consider the set $$G(D):=\left\{\displaystyle \sum_{i=1}^n\alpha_i d_i+p: \alpha_i\in \K,
	d_i\in D,p\in W,n\in\N\right\}$$
	\textbf{\underline{Step 1} :} In this step we will prove that $G(D)$ becomes
	a subevs of $X$ with  $D\subseteq G(D)$ and $[G(D)]_0=W$. \\
	For any $d\in D$, $d=1.d+\theta\in G(D)$ $\Rightarrow$ $D\subseteq G(D)$. Also for any $p\in W$,
	$0.d+p\in G(D)$ $\Rightarrow$ $W\subseteq G(D)$. For any two elements
	$x=\displaystyle \sum_{i=1}^m\alpha_id_i+p$, $y=\displaystyle \sum_{j=1}^n\beta_jd_j+q$ in $G(D)$  and
	any two scalars $\alpha,\beta$,
	$\alpha x+\beta y=\displaystyle \sum_{i=1}^m \alpha \alpha_id_i+\displaystyle
	\sum_{j=1}^n\beta\beta_j d_j+(\alpha p+\beta q)\in G(D)$
	[as $W$ is a subspace]. Let $y\in [G(D)]_0$. Then $y$ is a minimal element of
	$G(D)$. As $y\in G(D)$, $y$ can be written as $y=\displaystyle \sum_{i=1}^n\alpha_i d_i+p$.
	Our claim is that all $\alpha_i=0$. If not, there exists $j\in \{1,2,\dots,n\}$
	such that $\alpha_j\neq 0$. Then there exists $q_{d_j}\in W$ such that
	$q_{d_j}< d_j$ $\Rightarrow$ $\displaystyle \sum_{i=1}^n \alpha_i q_{d_i}+p< y$
	which contradicts that $y\in [G(D)]_0$, as
	$\displaystyle \sum_{i=1}^n \alpha_i q_{d_i}+p\in W\subseteq G(D)$.
	So all $\alpha_i=0$. Therefore $y=p\in W$ $\Rightarrow$
	$[G(D)]_0\subseteq W\subseteq G(D)\cap X_0$. Therefore $[G(D)]_0=G(D)\cap X_0=W$
	[by Note \ref{n:subevs}]. Also for any $x=\displaystyle \sum_{i=1}^n\alpha_i d_i+p\in G(D)$,
	$\displaystyle \sum_{i=1}^n \alpha_i q_{d_i}+p\in W=[G(D)]_0$ such that 
	$x\geq\displaystyle \sum_{i=1}^n \alpha_i q_{d_i}+p$. Thus it follows that $G(D)$
	is a subevs of $X$.\\
	\textbf{\underline{Step 2} :} In this step we shall show that $D$ is a basis
	of $G(D)\smallsetminus [G(D)]_0$. Since $D$ is an orderly independent subset of 
	$X\smallsetminus X_0$ and $G(D)$ is a subevs of $X$ containing $D$, by remark
	\ref{ch5:rm:suboi} we can say that $D$ is orderly independent in $G(D)\smallsetminus [G(D)]_0$.
	Now let $y\in G(D)\smallsetminus [G(D)]_0$. Then $y$ can be written as 
	$y=\displaystyle \sum_{i=1}^n\alpha_id_i+p$, where not all $\alpha_i=0$. Let 
	$\alpha_j\neq 0$. Then $\alpha_j d_j+\left(\displaystyle\sum_{\substack{i=1\\i\neq j}}^n
	\alpha_iq_{d_i}+p\right)\leq y$. As $\left(\displaystyle\sum_{\substack{i=1\\
			i\neq j}}^n\alpha_iq_{d_i}+p\right)\in W=[G(D)]_0$,
	so $y\in L(d_j)$ in $G(D)\smallsetminus [G(D)]_0$. Thus $D$ becomes a basis of $G(D)\smallsetminus [G(D)]_0$. \\
	Therefore $\dim G(D)=[\cd D:\dim W]=[\gamma:\delta]$.\end{proof}

\section{Computation of basis and dimension of some evs}
\markright{Computation of basis and dimension of some evs}

In this section we shall discuss the existence of basis of some particular evs
and thereby compute their dimensions. We show that there are evs which do not
have basis.

\begin{Th} Let $X$ be a single-primitive comparable topological evs. Then $X$
	has a basis and $\dim X=[1:\dim X_0]$.\end{Th}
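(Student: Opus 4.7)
The plan is to show that a single element of $X\smallsetminus X_0$ suffices to generate $X\smallsetminus X_0$, so that $\dim(X\smallsetminus X_0)=1$. Assuming first that $X\smallsetminus X_0\neq\emptyset$, I would pick any $x_0\in X\smallsetminus X_0$ and set $b:=x_0+(-1)\cdot p_{x_0}$, where $p_{x_0}$ denotes the unique primitive of $x_0$. Using the single-primitive identities $P_{u+v}=P_u+P_v$ and $P_{\alpha u}=\alpha P_u$, one immediately computes $p_b=p_{x_0}+(-1)p_{x_0}=\theta$. A short check shows $b\notin X_0$: otherwise $b=\theta$, and adding $p_{x_0}$ to both sides (using $(-1)p_{x_0}+p_{x_0}=\theta$ since $p_{x_0}\in X_0$) gives $x_0=p_{x_0}\in X_0$, contradicting the choice of $x_0$.

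Next, to verify that $\{b\}$ generates $X\smallsetminus X_0$, I would take an arbitrary $x\in X\smallsetminus X_0$ and set $y:=x+(-1)\cdot p_x$. By the same computation, $p_y=\theta$ and $y\in X\smallsetminus X_0$. For every $\alpha\in\K^*$, $p_{\alpha b}=\alpha\theta=\theta=p_y$, so the comparability hypothesis forces $\alpha b$ and $y$ to be $\leq$-comparable. The task then reduces to producing some $\alpha\in\K^*$ with $\alpha b\leq y$: adding $p_x$ and cancelling in $X_0$ gives $\alpha b+p_x\leq x$, hence $x\in L(b)$.

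The main obstacle is exhibiting such an $\alpha$, and this is exactly where the topological hypothesis becomes essential. I plan to argue by contradiction. Suppose $\alpha b\not\leq y$ for every $\alpha\in\K^*$; then by comparability $y\leq\alpha b$ for every $\alpha\in\K^*$. Choosing $\alpha_n=1/n$, continuity of scalar multiplication yields $\alpha_n b\to 0\cdot b=\theta$, and closedness of the partial order (applied to the graph $\{(y,\alpha_n b)\}$ in $X\times X$) then forces $y\leq\theta$. Combined with $\theta=p_y\leq y$, antisymmetry gives $y=\theta\in X_0$, contradicting $y\in X\smallsetminus X_0$.

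Once this is in place, $L(b)=X\smallsetminus X_0$, and since a singleton subset of $X\smallsetminus X_0$ is vacuously orderly independent, $\{b\}$ is a basis of $X\smallsetminus X_0$. Pairing it with any vector-space basis $B_0$ of $X_0$ yields a basis $[\{b\}:B_0]$ of $X$, and hence $\dim X=[1:\dim X_0]$, as claimed.
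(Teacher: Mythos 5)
Your proposal is correct and follows essentially the same route as the paper: reduce each $x\in X\smallsetminus X_0$ to the element $x+(-1)p_x$ with primitive $\theta$, invoke comparability to compare it with scalar multiples of a fixed generator lying above $\theta$, and use continuity of scalar multiplication together with closedness of the order to exclude the case where no scalar multiple fits below. The only cosmetic differences are that you explicitly manufacture the generator $b$ from an arbitrary element of $X\smallsetminus X_0$ (where the paper simply picks $x\in\uparrow\theta\smallsetminus\{\theta\}$) and you fold the paper's two-case comparison into a single contradiction argument.
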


\begin{proof}Since $X$ is single-primitive, for each $z\in X$ let us write 
	$P_z=\{p_z\}$. Let $x\in \uparrow \theta$ with $x\neq \theta$. Then $P_x=\{p_x\}=\{\theta\}$.
	Now for $y\in X\smallsetminus X_0$, $y-p_y\in \uparrow \theta$. Then $X$ being
	comparable evs, $x$ and $y-p_y$ are comparable as $P_x=P_{y-p_y}=\{\theta\}$. 
	If $x\leq y-p_y$ then $x+p_y\leq y$ $\Rightarrow$ $y\in L(x)$.
	If $x>y-p_y$ our claim is that there exists $\alpha \in \K^*$ such that 
	$\alpha x\leq y-p_y$ with $|\alpha| <1$. For, otherwise we can choose a sequence
	$\{\alpha_n\}$ in $\K^*$ such that $y-p_y<\alpha_n x\ \forall\,n\in\N$ and
	$\alpha_n\rightarrow 0$ as $n\to\infty$. Since $X$ is a topological evs we then
	have $y-p_y\leq \theta$ [taking limit $n\to\infty$] ------ a contradiction. 
	So there must exist one $\alpha \in \K^*$ such that $\alpha x\leq y-p_y$ 
	$\Rightarrow$ $y\in L(x)$. Thus $L(x)=X\smallsetminus X_0$. Clearly $\{x\}$ is
	orderly independent. Therefore $\{x\}$ is a basis of $X\smallsetminus X_0$.
	Consequently $X$ has a basis and $\dim X=[1:\dim X_0]$.\end{proof}

As the evs $[0,\infty)\times V$ is a single primitive comparable evs by above theorem we can say that $\dim([0,\infty)\times V)=[1:\dim V]$, for any Hausd\"{o}rff
topological vector space $V$. So in particular, if $V=\{\theta\}$ then the resulting
evs is order-isomorphic to $[0,\infty)$ and hence $\dim[0,\infty)=[1:0]$.
We have shown in the previous section that
$\dim[0,\infty)^\alpha=[\alpha:0]$, for any cardinal $\alpha$. This can also
be justified from the following more general example.

\begin{Ex} Let $\{X_i:i\in I\}$ be an arbitrary collection of exponential  vector 
	spaces, over the common field $\K$, each having a basis. Let $B_i$
	be a basis of $X_i\smallsetminus [X_i]_0$, $\forall\,i\in I$. Consider the 
	product evs $X:=\displaystyle \prod_{i\in I}X_i$ [see Example \ref{e:prod}]. Then $X_0=\displaystyle \prod_{i\in I}[X_i]_0$.
	For any $j \in I$ consider the set $D_j:=\displaystyle \prod_{i \in I} C_i$, 
	where $C_i:=\begin{cases} \{\theta_{X_i}\},\text{ when }i\neq j\\
	B_j,\text{ when }i=j \end{cases}$. Here $\theta_{X_i}$ is the identity in $X_i$.
	Then $D_j\subseteq X\smallsetminus X_0$, $\forall\,j\in I$. Let 
	$D:=\displaystyle\bigcup_{j\in I}D_j$. Then $D\subseteq X\smallsetminus X_0$.
	Now two different members in different $D_i$ are orderly independent. As each
	$B_i$ is a basis of $X_i$, so two different members
	of one $D_i$ are orderly independent. Thus any two different members of $D$ are 
	orderly independent and hence $D$ is orderly independent in $X\smallsetminus X_0$.
	We now show that $D$ is a basis of $X\smallsetminus X_0$. For any 
	$x=(x_i)_{i \in I}\in X\smallsetminus X_0$, $\exists$ some $k \in I$ such that
	$x_k\in X_k \smallsetminus [X_k]_0$ $\Rightarrow$ $\exists$ $b_k\in B_k$,
	$\alpha_k \in \K^*$ and $p_k\in [X_k]_0$ such that $\alpha_k b_k+p_k\leq x_k$.
	Now for $i \neq k$, $\exists$ $p_i \in[X_i]_0$ such that $p_i\leq x_i$. Let
	$b=(b_i)_{i \in I}$, where $b_i=\theta_{X_i}$ for $i \neq k$ and $p=(p_i)_{i \in I}\in X_0$.
	Then $\alpha_k b+p=(\alpha_k b_i+p_i)_{i \in I}\leq (x_i)_{i \in I}=x$ and
	$b\in D_k\subset D$ $\Rightarrow$ $x\in L(b)$. This shows that $D$ generates 
	$X\smallsetminus X_0$ and hence is a basis of $X\smallsetminus X_0$. Consequently, 
	$X$ has a basis and $\dim X=[\cd D:\dim X_0]$.  
	
	If $I$ be finite then $\cd D=\displaystyle\sum_{i\in I}\cd{D_i}
	=\sum_{i\in I}\cd{B_i}=\sum_{i\in I}\dim\big(X_i\smallsetminus[X_i]_0\big)$
	and $\dim X_0=\displaystyle\sum_{i\in I}\dim[X_i]_0$. For any four cardinal 
	number $\alpha,\beta,\gamma,\delta$ if we use the notation 
	$[\alpha+\gamma:\beta+\delta]=[\alpha:\beta]+[\gamma:\delta]$ then we can write
	$$\displaystyle\dim\prod_{i\in I}X_i=\left[\sum_{i\in I}\dim\left(X_i\smallsetminus[X_i]_0\right)
	:\sum_{i\in I}\dim[X_i]_0\right]=\sum_{i\in I}\big[\dim\left(X_i\smallsetminus[X_i]_0\right)
	:\dim[X_i]_0\big]$$
	
	If $I$ be infinite then also we get the similar expression
	as above, provided the sums (over $I$) be properly defined.
	
	If all $X_i$'s are same, say $X_i=Y,\ \forall\,i\in I$ and $\cd I=\alpha$ then
	we have\\ \centerline{$\dim(Y^\alpha)=[\alpha\cdot\dim(Y\smallsetminus Y_0):\alpha\cdot\dim Y_0]$}
	Thus it follows that for any cardinal $\alpha$, $\dim[0,\infty)^\alpha=[\alpha:0]$,
	since $\dim[0,\infty)=[1:0]$.
\end{Ex}

\begin{Th} For every Hausd\"{o}rff topological vector space $\X$, 
	$\com{\X}{}$ [discussed in \ref{CX}] has a basis.\label{ch5:t:dimcx}\end{Th}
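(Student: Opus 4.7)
The plan is to invoke Theorem~\ref{ch5:t:basis}, which reduces the existence of a basis of $\com{\X}{}$ to checking that $Q(\com{\X}{})$ generates $\com{\X}{}\smallsetminus[\com{\X}{}]_0$. First I would identify the primitive space. Since $\{a\}\subseteq A$ for every $a\in A$ and the partial order on $\com{\X}{}$ is set-inclusion, the minimal members of $\com{\X}{}$ are precisely the singletons (the empty set being excluded from $\com{\X}{}$), so $[\com{\X}{}]_0=\big\{\{x\}:x\in\X\big\}$ and $\com{\X}{}\smallsetminus[\com{\X}{}]_0$ consists of all compact subsets of $\X$ with at least two elements.

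The key observation is that every two-element set already belongs to $Q(\com{\X}{})$. If $A=\{a,b\}$ with $a\neq b$, then the non-empty compact subsets of $A$ are $\{a\}$, $\{b\}$ and $A$ itself, of which only $A$ fails to be primitive. Hence $\downarrow A\smallsetminus[\com{\X}{}]_0=\{A\}$, and since $A\in L(A)$ trivially, the inclusion $\downarrow A\smallsetminus[\com{\X}{}]_0\subseteq L(A)$ holds, so $A\in Q(\com{\X}{})$.

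Next I would verify that these doubletons already generate $\com{\X}{}\smallsetminus[\com{\X}{}]_0$. For any $C\in\com{\X}{}$ with $|C|\geq 2$, pick distinct points $a,b\in C$; then $1\cdot\{a,b\}+\{\theta\}=\{a,b\}\subseteq C$ with $\{\theta\}\in[\com{\X}{}]_0$, so $C\in L(\{a,b\})$. Consequently the family of doubletons is a subcollection of $Q(\com{\X}{})$ which generates $\com{\X}{}\smallsetminus[\com{\X}{}]_0$, and a fortiori $Q(\com{\X}{})$ itself is a generator. Theorem~\ref{ch5:t:basis} then delivers a basis of $\com{\X}{}$.

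The main obstacle is conceptual rather than computational: one needs to recognise that the defining condition of $Q$ is automatically satisfied by sets whose principal downset contains nothing beyond themselves modulo primitives, and that doubletons are exactly such ``minimal non-primitive'' witnesses. An explicit basis can then be read off from the equivalence-class construction inside the proof of Theorem~\ref{ch5:t:basis} applied to the doubletons, namely by selecting one representative from each class under the relation $\{a,b\}\sim\{c,d\}\iff\{c,d\}=\alpha\{a,b\}+\{p\}$ for some $\alpha\in\K^*$ and $p\in\X$.
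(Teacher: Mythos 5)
Your proposal is correct, but it reaches the conclusion by a genuinely different route from the paper. The paper does not invoke Theorem~\ref{ch5:t:basis} at all: it exhibits an explicit candidate basis $\mathscr N=\bigl\{\{\theta,x\}:x\in\X'\bigr\}$, where $\X'$ picks one representative from each class of $\X\smallsetminus\{\theta\}$ under the relation $x\sim y\iff x=\alpha y$ $(\alpha\in\K^*)$, and then verifies directly that $\mathscr N$ generates (via $\{\theta,x-y\}+\{y\}=\{x,y\}\subseteq A$) and is orderly independent (via a short case analysis showing $\alpha\{\theta,y\}+\{z\}=\{\theta,x\}$ forces $x,y$ into the same $\sim$-class). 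You instead verify the hypothesis of the abstract criterion: you correctly identify $[\com{\X}{}]_0$ as the singletons, observe that for a doubleton $A$ one has $\downarrow A\smallsetminus[\com{\X}{}]_0=\{A\}\subseteq L(A)$ so that $A\in Q(\com{\X}{})$, and note that doubletons generate since $1\cdot\{a,b\}+\{\theta\}\subseteq C$ for any $C$ with at least two points; Theorem~\ref{ch5:t:basis} then applies. Your argument buys economy --- the orderly-independence check, which is the fiddly part of the paper's proof, is absorbed into the already-proved equivalence-class machinery of Theorem~\ref{ch5:t:basis} --- while the paper's approach buys an explicit basis and hence the dimension formula $\dim\com{\X}{}=[\mathrm{card}(\mathscr N):\dim\X]$, which the paper exploits in the subsequent remark. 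Your closing observation that a concrete basis can be recovered by choosing representatives of doubletons modulo $\{c,d\}=\alpha\{a,b\}+\{p\}$ is right, and in fact recovers exactly the paper's $\mathscr N$ up to the choice of representatives.
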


\begin{proof}Let us consider the relation `$\sim$' on $\X\smallsetminus \{\theta\}$, defined as\\
	\centerline{$x\sim y$ $\Leftrightarrow$ $\exists$ $\alpha\in \K^*$ such that $x=\alpha y$}
	Then `$\sim$' becomes an equivalence relation on $\X\smallsetminus\{\theta\}$.
	Let us construct a set $\X'$ taking exactly one representative
	from each equivalence class relative to `$\sim$' and consider the set\\
	\centerline{$\mathscr N:=\big\{\{\theta,x\}: x\in \X'\big\}$}
	We now show that $\mathscr N$ becomes a basis of $\com{\X}{}\smallsetminus [\com{\X}{}]_0$.
	If $A\in \com{\X}{}\smallsetminus [\com{\X}{}]_0$,
	then there must exist two elements $x,y$ of $\X$ with $\{x,y\}\subseteq A$ and $x\neq y$.
	Then $\{\theta,x-y\}+\{y\}=\{x,y\}\subseteq A$.
	Now $x-y\in\X\smallsetminus\{\theta\}$ $\Rightarrow$ $\exists$ $z\in\X'$ and 
	$\alpha \in \K^*$ such that $x-y=\alpha z$. So we can write
	$\alpha\{\theta,z\}+\{y\}\subseteq A$ $\Rightarrow$ $A\in L(\{\theta,z\})$. 
	Therefore $\mathscr N$ generates $\com{\X}{}\smallsetminus[\com{\X}{}]_0$.
	We now show that $\mathscr N$ is an orderly independent set in $\com{\X}{}\smallsetminus [\com{X}{}]_0$. For
	any two elements $\{\theta,x\}$ and $\{\theta,y\}$ in $\mathscr N$, if $\{\theta,x\}\in L(\{\theta,y\})$
	then $\exists$ $\alpha \in K^*$ such that $\alpha\{\theta,y\}+\{z\}\subseteq \{\theta,x\}$ for
	some $z\in \X$ $\Rightarrow$ $\{z,\alpha y+z\}=\{\theta,x\}$ [$\because z\neq\alpha y+z$]
	$\Rightarrow$ either $z=\theta$ or $z=x$. If $z=\theta$ then $\alpha y=x$ 
	which means that $x,y$ belong to the same equivalence class relative to 
	`$\sim$' and hence $\{\theta,x\},\{\theta,y\}$ cannot be two distinct elements
	of $\mathscr N$ ------ which is not the case. If $z=x$ then
	$\alpha y+x=\theta$ $\Rightarrow$ $x=-\alpha y$ which again leads to the same
	contradiction. This proves that any two elements of
	$\mathscr N$ are orderly independent. Therefore $\mathscr N$ is orderly 
	independent in $\com{\X}{}\smallsetminus [\com{X}{}]_0$ and
	hence becomes a basis of $\com{\X}{}\smallsetminus [\com{\X}{}]_0$. Consequently,
	$\com{\X}{}$ has a basis and $\dim\com{\X}{}=[\cd{\mathscr N}:\dim\X]$.
\end{proof}

\begin{Rem}We have shown in the above theorem \ref{ch5:t:dimcx} that $\mathscr N$
	forms a basis of $\com{\X}{}\smallsetminus [\com{\X}{}]_0$.
	We now show that this basis depends on a basis (as vector space) of $\X$.
	
	(i) If $\X$ be a Hausd\"{o}rff topological vector space of dimension $1$, then
	any non-zero element of $\X$ is a scalar multiple of a single basic vector of $\X$
	and hence $\mathscr N$ contains exactly one element. So $\dim\com{\X}{}=[1:1]$.
	For that reason dimension of $\com{\R}{}$ over $\R$ is $[1:1]$ and
	dimension of $\com{\C}{}$ over $\C$ is $[1:1]$. 
	
	(ii) Let $\X$ be a Hausd\"{o}rff topological vector space of dimension $2$ and $B=\{a,b\}$ be a basis of $\X$.
	We first show that $\X'=\{a+\beta b:\beta \in\K\}\cup\{b\}$, where $\X'$ is 
	as defined in the proof of the theorem \ref{ch5:t:dimcx}.
	Any two distinct elements $a+\beta_1b,a+\beta_2 b\in\X\smallsetminus\{\theta\}$
	must lie in two different equivalence classes relative to `$\sim$', since for
	any $\alpha\in \K^*$ if $\alpha (a+\beta_1 b)=a+\beta_2 b$ then $\alpha=1$ 
	and hence $\beta_1=\beta_2$ [as $\{a,b\}$ is a linearly independent subset of $\X$] --- this contradicts that $a+\beta_1b\neq a+\beta_2 b$. Also linear 
	independence of $a,b$ implies that $a+\beta b$ and $b$ must lie in two 
	different equivalence classes relative to `$\sim$', for any $\beta\in\K$.
	Now for any non-zero element $x\in\X$, $\exists\,\alpha,\beta\in\K$ (not both
	zero) such that $x=\alpha a+\beta b$ [since $\{a,b\}$ is a basis of $\X$]. If
	$\alpha \neq 0$ then $x=\alpha(a+\beta \alpha ^{-1} b)$ $\Rightarrow$
	$x$ lies in the class (relative to `$\sim$') whose representative is 
	$(a+\beta \alpha ^{-1} b)$. If $\alpha =0$ then $x$ lies in the equivalence
	class (relative to `$\sim$') whose representative is $b$. Therefore 
	$\X'=\{a+\beta b:\beta \in \K\}\cup\{b\}$. Now the map
	$\alpha \longmapsto a+\alpha b$ creates a bijection between $\K$ and $\X'\smallsetminus \{b\}$.
	So we can say that cardinality of $\X'$ and hence cardinality of 
	$\mathscr N$ is $c$, the cardinality of the set of real numbers $\R$. 
	Therefore $\dim \com{\X}{}=[c:2]$.
	For that reason dimension of $\com{\C}{}$ over $\R$ is $[c:2]$.
	
	(iii) In a similar manner as above we can show that for a  \emph{well-ordered} basis 
	$B$ of a Hausd\"{o}rff topological vector space $\X$,\\ 
	\centerline{$\X'=(e_1+<B_1>)\cup (e_2+<B_2>)\cup\cdots \cup(e_n+<B_n>)\cup\cdots$}
	where $B=\{e_1,e_2,\dots,e_n,\dots\}$, $B_1=B\smallsetminus \{e_1\}$, 
	$B_n=B_{n-1}\smallsetminus \{e_n\}$, $\forall\,n\geq2$ and $<B_i>$ denotes the 
	linear span of $B_i$ in $\X$, $\forall\,i$.\end{Rem}

\begin{Th} For every vector space $\X$, the evs $\mathscr L(\X)$ has a basis. {\em [The evs $ \mathscr L(\X) $ is discussed in Example \ref{ch3:e:LX}]}
	\label{ch:e:lx}\end{Th}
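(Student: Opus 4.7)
The plan is to exhibit an explicit basis of $\mathscr L(\X) \smallsetminus [\mathscr L(\X)]_0$ consisting of one-dimensional subspaces of $\X$. The key observation is that in this evs the testing sets degenerate: since $\alpha V = V$ for every $\alpha \in \K^*$ and $[\mathscr L(\X)]_0 = \big\{\{\theta\}\big\}$ (with $V + \{\theta\} = V$), for any $V \in \mathscr L(\X) \smallsetminus \{\{\theta\}\}$ we have
\[L(V) = \{W \in \mathscr L(\X) : V \subseteq W\} = \uparrow V.\]
So the task reduces to finding a family of nonzero subspaces such that every nonzero subspace contains at least one of them and none of them contains another.

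The natural candidates are the one-dimensional subspaces of $\X$, since every nonzero subspace contains some line through $\theta$, and two distinct lines never contain one another. To turn this into a set rather than a family indexed by vectors, I would reuse the equivalence relation employed in the proof of Theorem \ref{ch5:t:dimcx}: on $\X \smallsetminus \{\theta\}$, declare $x \sim y$ iff $y = \alpha x$ for some $\alpha \in \K^*$, and let $\X'$ be a complete set of representatives. Then I would define
\[B := \big\{\,\s\{x\} : x \in \X'\,\big\}.\]

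Next I would verify the two basis conditions. For generation: any $V \in \mathscr L(\X) \smallsetminus \{\{\theta\}\}$ contains some nonzero vector $x$, and if $x' \in \X'$ is its representative then $\s\{x'\} = \s\{x\} \subseteq V$, so $V \in L(\s\{x'\})$; hence $\mathscr L(\X) \smallsetminus \{\{\theta\}\} = \bigcup_{x \in \X'} L(\s\{x\})$. For orderly independence: if $\s\{x_1\}, \s\{x_2\} \in B$ are distinct, then $x_1, x_2$ lie in different $\sim$-classes, so $x_1 \notin \s\{x_2\}$ and $x_2 \notin \s\{x_1\}$, i.e.\ neither of the one-dimensional subspaces contains the other; by the description of $L$ above, $\s\{x_1\} \notin L(\s\{x_2\})$ and vice versa.

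There is no real obstacle here because the evs $\mathscr L(\X)$ is algebraically rigid under scalar multiplication, which collapses $L(V)$ to the upset of $V$; the only bit of care is invoking the representative set to make $B$ a genuine set indexed bijectively by the lines through $\theta$. As a byproduct, since $[\mathscr L(\X)]_0 = \big\{\{\theta\}\big\}$ has dimension $0$ as a vector space and $\cd B$ equals the cardinality of the projective space of $\X$, one obtains $\dim \mathscr L(\X) = [\cd{\X'} : 0]$.
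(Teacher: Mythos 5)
Your proof is correct and follows essentially the same route as the paper: the basis is the collection of all one-dimensional subspaces of $\X$ (your $B$ indexed by the representative set $\X'$ is exactly this collection, since $\s\{x\}=\s\{\alpha x\}$), generation comes from every nonzero subspace containing a line, and orderly independence from the fact that no line contains a distinct line. Your explicit remark that $L(V)=\uparrow V$ in $\mathscr L(\X)$ is a nice clarification that the paper uses only implicitly.
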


\begin{proof} Let $\mathscr T$ be the collection of all one dimensional subspaces
	of $\X$. We now show that $\mathscr T$ forms a basis of $\mathscr L(\X)
	\smallsetminus[\mathscr L(\X)]_0$. For
	any non-trivial subspace $\mathcal Y$ of $\X$,
	there exists a non-zero element $x\in\mathcal{Y}$ such that $<x>\,\subseteq\mathcal Y$ \big[here
	$<x>$ denotes the linear span of $x$ in $\X$\big]. So $\mathcal Y\in L(<x>)$. Also
	$<x>\,\in \mathscr T$. Thus $\mathscr T$ generates 
	$\mathscr L(\X)\smallsetminus[\mathscr L(\X)]_0$. For any two distinct elements
	$<x>,<y>\,\in \mathscr T$, if $\alpha<x>\,\subseteq\,<y>$ for some $\alpha\in\K^*$
	then $<x>\,=\alpha<x>\,\subseteq\,<y>$ $\Rightarrow$ $<x>\,=\,<y>$ which
	contradicts that $<x>$ and $<y>$ are distinct. So we can say that $\mathscr T$
	is an orderly independent subset of $\mathscr L(\X)\smallsetminus[\mathscr L(\X)]_0$.
	Therefore $\mathscr T$ forms a basis of $\mathscr L(\X)\smallsetminus[\mathscr L(\X)]_0$.
	Consequently $\mathscr L(\X)$ has a basis and $\dim\mathscr L(\X)=[\cd{\mathscr T}:0]$,
	since $[\mathscr L(\X)]_0=\big\{\{\theta\}\big\}$.
\end{proof}

From above theorem we can immediately get the following result. Also $\mathscr T$ is the \emph{only} basis of 
$\mathscr L(\X)\smallsetminus[\mathscr L(\X)]_0$.

\begin{Res}$\dim\mathscr L(\X)=[1:0]$, when $\dim \X=1$ and 
	$\dim\mathscr L(\X)=[c:0]$, when $\dim \X=2$, $c$ being the 
	cardinality of the set of all reals $\R$.\end{Res}
	
\begin{Not}
From the previous result we can say that $\dim\mathscr L(\R)=[1:0]$ which is same with the $\dim [0,\infty)$. But $\mathscr L(\R)$ and $[0,\infty)$ are not order-isomorphic as first one is non-topological evs whereas second one is a topological evs and being topological is an evs property. This example shows that converse part of the statement that equality of dimension is an evs property which we have discussed in  \ref{equality dimension} is not true.
\end{Not}	

\begin{Th}For any $n\in\N$, $\mathscr D^n[0,\infty)$ has a basis and 
	$\dim \mathscr D^n[0,\infty)=[1:0]$.\end{Th}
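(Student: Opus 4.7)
The plan is to first identify $[\mathscr{D}^n[0,\infty)]_0 = \{\theta\}$ and then exhibit a single element $b$ whose testing set $L(b)$ exhausts $X \smallsetminus X_0$; this single-element orderly independent generator will be a basis.

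First, I would show $[\mathscr{D}^n[0,\infty)]_0 = \{\theta\}$, where $\theta = (0, \ldots, 0)$. This follows because the dictionary order on $[0,\infty)^n$ is a total order with $\theta$ as strict minimum: for any $y \neq \theta$, if $k$ is the smallest index with $y_k > 0$, the lexicographic comparison agrees up to position $k-1$ and then gives $0 < y_k$, so $\theta < y$. Hence $\theta$ is the unique minimal element, and $\dim X_0 = 0$.

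Next I would take $b := (0, \ldots, 0, 1) \in X \smallsetminus X_0$ and argue that $\{b\}$ is a basis of $X \smallsetminus X_0$. Orderly independence is vacuous for a singleton (no distinct pairs to check). For the generating property, fix $x = (x_1, \ldots, x_n) \neq \theta$ and let $k$ be the least index with $x_k > 0$. Since $\alpha \cdot b = (0, \ldots, 0, |\alpha|)$ for every $\alpha \in \K^*$, the first $n-1$ positions of $\alpha b$ are zero; if $k < n$, the dictionary comparison between $\alpha b$ and $x$ is decided at position $k$ with $0 < x_k$, giving $\alpha b < x$ for \emph{any} $\alpha \in \K^*$; if $k = n$, choose $|\alpha| = x_n$ so that $\alpha b = x$. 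In both cases $\alpha b \leq x$, so $x \in L(b)$ (taking the primitive $p = \theta \in X_0$). Therefore $L(b) = X \smallsetminus X_0$.

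The delicate step is the case $k = n$, where every coordinate up to position $n-1$ of both $\alpha b$ and $x$ is zero and the dictionary comparison reduces to a single scalar that must be tuned precisely; this is what forces the nonzero entry of $b$ to sit in the \emph{last} coordinate rather than in an earlier one (with $b = (1, 0, \ldots, 0)$, for instance, no positive rescaling lies below a vector of the form $(0, \ldots, 0, x_n)$). Granted the case analysis above, $\{b\}$ is a basis of $X \smallsetminus X_0$, and combining with $\dim X_0 = 0$ we conclude $\dim \mathscr{D}^n[0,\infty) = [1 : 0]$.
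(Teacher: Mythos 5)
Your proof is correct and follows essentially the same route as the paper: both identify $[\mathscr D^n[0,\infty)]_0=\{\theta\}$ and show that the singleton $\{(0,\dots,0,1)\}$ generates $X\smallsetminus X_0$ by splitting on whether the first nonzero coordinate of $x$ occurs before position $n$ (any scalar works) or at position $n$ (tune the scalar; the paper uses $x_n/2$ where you use $x_n$, an immaterial difference). Your extra verification that $\theta$ is the unique minimal element is a detail the paper asserts without proof, but the argument is the same.
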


\begin{proof} We first show that $(0,0,\dots,0,1)$ generates 
	$\mathscr D^n[0,\infty)\smallsetminus [\mathscr D^n[0,\infty)]_0$.
	Let\\ $x=(x_1,\dots,x_n)\in\mathscr D^n[0,\infty)\smallsetminus [\mathscr D^n[0,\infty)]_0$.
	Since $[\mathscr D^n[0,\infty)]_0=\big\{(0,\dots,0)\big\}$, there exists 
	$i\in \{1,2,\dots,n\}$ such that $x_i\neq 0$ and
	$x_j=0$, for all $j<i$. If $i<n$ then obviously $(0,0,\dots,0,1)\leq x$. If 
	$i=n$ then $\frac{x_i}{2}(0,0,\dots,0,1)\leq x$.
	In any case $x\in L\big((0,0,\dots,1)\big)$. Since $\big\{(0,\dots,0,1)\big\}$
	is orderly independent it follows that $\big\{(0,\dots,0,1)\big\}$ is a basis 
	of $\mathscr D^n[0,\infty)\smallsetminus[\mathscr D^n[0,\infty)]_0$ and 
	hence $\dim\mathscr D^n[0,\infty)=[1:0]$.
\end{proof}

Following exampale shows that there exist an evs which has no basis. 

\begin{Th} $X:=\mathscr D\big([0,\infty):\N\big)$ has no basis.\end{Th}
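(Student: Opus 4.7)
The plan is to apply Theorem \ref{ch5:t:basis} and show that the feasible set $Q(X)$ is empty. Since $X = \mathscr{D}([0,\infty):\mathbb{N}) = [0,\infty)^{\mathbb{N}}$ carries the dictionary order induced by the natural well-ordering of $\mathbb{N}$, the zero sequence $\theta = (0,0,\dots)$ is the unique minimal element; hence $X_0 = \{\theta\}$. For each non-zero $x = (x_i) \in X$, let $k(x)$ denote the least index $i \in \mathbb{N}$ for which $x_i > 0$. Because the scalar multiplication satisfies $\alpha \cdot x = |\alpha| x$, the testing set simplifies to $L(x) = \{y \in X : y \geq \beta x \text{ for some } \beta > 0\}$.

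The first key step is to characterise $L(x)$ in terms of the index $k(x)$: I would show that for $x, y \in X \setminus \{\theta\}$, $y \in L(x)$ if and only if $k(y) \leq k(x)$. The argument is a short case-split in the dictionary order. If $k(y) < k(x)$, then at index $k(y)$ we have $\beta x_{k(y)} = 0 < y_{k(y)}$ for every $\beta > 0$, so $\beta x < y$. If $k(y) = k(x)$, choose $\beta > 0$ small enough that $\beta x_{k(x)} < y_{k(x)}$; again $\beta x < y$. Conversely, if $k(y) > k(x)$, then at index $k(x)$ we have $y_{k(x)} = 0 < \beta x_{k(x)}$ for every $\beta > 0$, which forces $\beta x > y$, so $y \notin L(x)$.

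The second key step is to produce, for each $x \in X \setminus \{\theta\}$, an element of $\downarrow x \setminus \{\theta\}$ lying outside $L(x)$. Set $k = k(x)$ and define $y$ by $y_i = x_i$ for $i < k$ (so $y_i = 0$ there), $y_k = 0$, $y_{k+1} = 1$, and $y_j = 0$ for $j > k+1$. Then $y$ and $x$ first differ at index $k$ with $y_k = 0 < x_k$, so $y < x$; but $k(y) = k+1 > k(x)$, so by the characterisation above $y \notin L(x)$. Consequently $\downarrow x \setminus \{\theta\} \not\subseteq L(x)$, i.e.\ $x \notin Q(X)$. As $x$ was arbitrary, $Q(X) = \emptyset$.

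Since $X \setminus X_0$ is non-empty (for instance $(1,0,0,\dots) \in X \setminus \{\theta\}$), the empty set $Q(X)$ cannot generate $X \setminus X_0$, and Theorem \ref{ch5:t:basis} then immediately gives that $X$ has no basis. The only delicate step is the characterisation of $L(x)$ via $k(x)$ --- one has to be attentive with the dictionary order and the fact that all coordinates lie in $[0,\infty)$ so that the ``first differing index'' cannot occur before $k(x)$ --- but once this is done, the counter-example $y$ is immediate and the rest is a one-line appeal to Theorem \ref{ch5:t:basis}.
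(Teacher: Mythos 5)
Your proposal is correct and follows essentially the same route as the paper: you show $Q(X)=\emptyset$ by exhibiting, for each non-zero $x$ with first non-zero coordinate at index $k$, an element $y\leq x$ whose first non-zero coordinate sits at index $k+1$, so that $y\notin L(x)$, and then invoke Theorem \ref{ch5:t:basis}. The only difference is that you make explicit the characterisation $y\in L(x)\Leftrightarrow k(y)\leq k(x)$, which the paper leaves implicit; this is a harmless (and correct) elaboration.
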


\begin{proof} Let $x=(x_i)_{i\in\N}\in X\smallsetminus X_0$. Since here
	$X_0=\big\{(0,0,\dots)\big\}$, there must exist a least positive integer
	$p$ such that $x_p\neq 0$. If we consider 
	$y=(y_i)_{i\in \N}$, where $y_i=x_i$, $\forall\,i\neq p,p+1$ and $y_p=0$, $y_{p+1}=1$
	then $y\leq x$ and $y\notin X_0$; but there does not exist any $\alpha \in \K^*$ 
	such that $\alpha x\leq y$ ------ which means that $y\notin L(x)$. This shows 
	that $x\notin Q(X)$ and this holds for any non-zero element $x$ of $X$. Therefore
	$Q(X)=\emptyset$. So $\mathscr D\big([0,\infty):\N\big)$ has
	no basis.\end{proof}

Looking at the proof of the above theorems we can get the following 
generalised theorem.

\begin{Th}For a well-ordered set $I$, $\mathscr D(X:I)$ has
	a basis iff $I$ has a maximum element.\end{Th}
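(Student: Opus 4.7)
Both directions are obtained by generalising the arguments in the two preceding theorems (about $\mathscr D^n[0,\infty)$ and $\mathscr D([0,\infty):\N)$) from the coordinate sets $\{1,\dots,n\}$ and $\N$ to an arbitrary well-ordered $I$. I take $X$ to be an evs possessing a basis of $X\smallsetminus X_0$ with $X\smallsetminus X_0\neq\emptyset$ (as is the case for $[0,\infty)$), and use freely that $[\mathscr D(X:I)]_0=(X_0)^I$ together with the basic fact that $X_0$ consists of minimal elements, hence any two distinct primitives are incomparable.

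\emph{Sufficiency.} Suppose $I$ has a maximum $m$, and let $B$ be a basis of $X\smallsetminus X_0$. For each $b\in B$ set $\delta^b_m:=b$ and $\delta^b_i:=\theta_X$ for $i\neq m$, and write $\widetilde B:=\{\delta^b:b\in B\}$. To show $\widetilde B$ generates $\mathscr D(X:I)\smallsetminus (X_0)^I$, fix $x=(x_i)$ not in $(X_0)^I$ and let $j:=\min\{i:x_i\notin X_0\}$ (well-defined by well-ordering). If $j<m$, pick any $b\in B$, any $\alpha\in K^*$, and a primitive $r<x_j$ supplied by $A_6$; the primitive vector $p$ with $p_i=x_i$ for $i<j$, $p_j=r$, and $p_i=\theta_X$ otherwise satisfies $\alpha\delta^b+p\leq x$, since the first coordinate where they disagree is $j$ with $r<x_j$. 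If $j=m$, invoke that $B$ generates $X\smallsetminus X_0$ to write $\alpha b+r\leq x_m$ for some $b\in B,\alpha\in K^*,r\in X_0$, and take $p_i=x_i\in X_0$ for $i<m$ and $p_m=r$; again $\alpha\delta^b+p\leq x$. For orderly independence, suppose $\alpha\delta^{b'}+p\leq\delta^b$ with $b\neq b'$. At each $i<m$ the left coordinate $p_i\in X_0$ and the right coordinate $\theta_X$ are both primitive, so by minimality the dictionary comparison forces $p_i=\theta_X$. The inequality therefore collapses at coordinate $m$ to $\alpha b'+p_m\leq b$ in $X$, i.e.\ $b\in L(b')$, contradicting orderly independence of $B$.

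\emph{Necessity.} Suppose $I$ has no maximum. By Theorem \ref{ch5:t:basis} it suffices to show $Q(\mathscr D(X:I))=\emptyset$. Fix an arbitrary $x\in\mathscr D(X:I)\smallsetminus(X_0)^I$, let $p:=\min\{i:x_i\notin X_0\}$, and let $p^+:=\min\{i:i>p\}$, which exists because $I$ has no maximum. Choose a primitive $y_p<x_p$ using $A_6$, choose any non-primitive $w\in X\smallsetminus X_0$, and define $y$ by $y_p$ at $p$, $w$ at $p^+$, and $y_i=x_i$ elsewhere. Then the first coordinate of disagreement with $x$ is $p$, where $y_p<x_p$ strictly, so $y<x$ in the dictionary order, while $y\notin(X_0)^I$ because $y_{p^+}=w$ is non-primitive; hence $y\in\downarrow x\smallsetminus[\mathscr D(X:I)]_0$. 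To show $y\notin L(x)$, suppose for contradiction $\alpha x+q\leq y$ for some $\alpha\in K^*$, $q\in(X_0)^I$. For each $i<p$ both coordinates $\alpha x_i+q_i\in X_0$ and $y_i=x_i\in X_0$ are primitive, so by minimality the only way to satisfy the dictionary inequality up to index $p$ is coordinate-wise equality there. But at index $p$, $\alpha x_p+q_p$ is non-primitive (by $A_4$ applied to $x_p\notin X_0$, together with the fact that adding a primitive to a non-primitive yields a non-primitive via the vector-space closure of $X_0$), whereas $y_p\in X_0$ is primitive; minimality of $y_p$ then rules out $\alpha x_p+q_p\leq y_p$. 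This contradiction gives $x\notin Q(\mathscr D(X:I))$, and arbitrariness of $x$ yields $Q=\emptyset$.

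\emph{Main obstacle.} The delicate step is necessity, where one must exploit twice that primitive elements of $X$ are minimal: first to collapse the dictionary comparison at indices below $p$ to coordinate-wise equality, and then to rule out $\alpha x_p+q_p\leq y_p$ at index $p$. The sufficiency direction is essentially bookkeeping once $\widetilde B$ is identified as the natural candidate, the only subtlety being the dichotomy $j<m$ versus $j=m$ in the generating argument.
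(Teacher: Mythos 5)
Your proof is correct and takes exactly the route the paper intends: the paper offers no written proof of this theorem, merely remarking that it follows by generalising the two preceding arguments (a basis supported at the top coordinate, as with $\{(0,\dots,0,1)\}$ for $\mathscr D^n[0,\infty)$, for sufficiency; emptiness of the feasible set $Q$ for necessity), and your write-up carries out precisely that generalisation, including the key verifications that $[\mathscr D(X:I)]_0=(X_0)^I$ and that minimality of primitives collapses the dictionary comparison below the critical index. Your explicitly added hypotheses --- that $X$ itself has a basis and that $X\smallsetminus X_0\neq\emptyset$ --- are genuinely needed and are left implicit in the paper's statement.
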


\end{document}